 \date{\today}
\theoremstyle{plain}
\newtheorem{thm}{Theorem}[section]
\newtheorem{lem}[thm]{Lemma}
\newtheorem{prop}[thm]{Proposition}
\theoremstyle{definition}
\theoremstyle{remark}
\newtheorem{rem}{Remark}[section]
\numberwithin{equation}{section}
\renewcommand{\theequation}{\thesection.\arabic{equation}}
\renewcommand{\u}{{\bf u}}
\renewcommand{\H}{{\bf H}}
\renewcommand{\j}{{\bf J}}
\renewcommand{\i}{{\bf E}}
\renewcommand{\v}{{\vvvert}}
\newcommand{\dv}{{\rm div\,}}
\newcommand{\dif}{{\rm d}}
\newcommand{\cu}{{\rm curl\,}}
\newcommand{\U}{{\mathbf U}}
\newcommand{\F}{{\mathbf F}}
\newcommand{\G}{{\mathbf G}}
\newcommand{\W}{{\mathbf W}}
\newcommand{\D}{{\mathbf D}}
\newcommand{\A}{{\mathbf A}}
\newcommand{\s}{{\mathbf S}}
\newcommand{\R}{{\mathbf R}}
\newcommand{\p}{{\partial}}
\newcommand{\pa}{{\partial^\alpha_x}}
\begin{document}

\title[Complete electromagnetic fluid system to full MHD equations]
 {Convergence of the complete electromagnetic fluid system  to the full compressible
magnetohydrodynamic equations}
\author{Song Jiang}
\address{Institute of Applied Physics and Computational Mathematics, P.O. Box 8009, Beijing 100088, P.R. China}
 \email{jiang@iapcm.ac.cn}

\author [Fucai Li] {Fucai Li$\,^*$}%
 \thanks{$^*$Corresponding  author}
\address{Department  of Mathematics,  Nanjing University, Nanjing 210093, P.R. China}
 \email{fli@nju.edu.cn}

\begin{abstract}
The full compressible magnetohydrodynamic equations can be derived formally from the complete
electromagnetic  fluid  system in some sense as the dielectric constant tends to zero.
This process is usually referred as magnetohydrodynamic approximation  in physical books.
In this paper we justify this singular limit rigorously in the framework of smooth solutions
for  well-prepared  initial data.
  \end{abstract}

\keywords{Complete electromagnetic fluid system, full compressible magnetohydrodynamic equations,
zero  dielectric constant limit}

\subjclass[2000]{ 76W05, 35Q60, 35B25}

\maketitle

\renewcommand{\theequation}{\thesection.\arabic{equation}}
\setcounter{equation}{0}
\section{Introduction and Main Results} \label{S1}

Electromagnetic dynamics studies  the motion of an electrically conducting fluid in
 the presence of an electromagnetic field.
 In electromagnetic dynamics  the fluid and the electromagnetic field are connected closely with each other,
hence the fundamental system of electromagnetic dynamics usually contains
 the hydrodynamical equations and the electromagnetic ones.
The complete   electromagnetic  fluid  system includes the conservation of mass,
momentum, and energy to the fluid, the Maxwell system to the electromagnetic field, and the  conservation of electric charge, which take the forms (\!\cite{Im,K,EM})
\begin{align}
&\p_t \rho  +\dv(\rho\u)=0, \label{faa} \\
&\rho(\p_t\u +  \u\cdot \nabla\u)+\nabla P
  = \dv\Psi(\u)+\rho_\textrm{e}\i+\mu_0\j\times \H, \label{fab} \\
&\rho \frac{\p e}{\p \theta}(\p_t\theta+\u\cdot \nabla \theta)+\theta \frac{\p P}{\p \theta} \dv \u \nonumber\\
& \qquad \quad \quad =\dv(\kappa \nabla \theta)
 +\Psi(\u):\nabla\u+(\j-\rho_\textrm{e}\u)\cdot (\i+\mu_0\u\times \H) ,\label{fac}\\
&\epsilon\p_t\i-\cu \H+\j=0, \label{fad}\\
&\p_t \H+\frac{1}{\mu_0 }\cu \i=0, \label{fae}\\
&\p_t (\rho_\textrm{e})+\dv \j=0,\label{faf}\\
&\epsilon\dv \i=\rho_\textrm{e},\quad \dv \H=0.\label{fag}
\end{align}
The system \eqref{faa}--\eqref{fag} consists of $14$ equations in $12$ unknowns, namely, the mass density $\rho$,
the velocity $\u=(u_1,u_2,u_3)$, the absolute temperature $\theta$, the electric field $\i=(E_1,E_2,E_3) $,
the magnetic field $ \H=(H_1, H_2,H_3)$, and the electric charge density $\rho_\textrm{e}$.
The quantity $\Psi(\u)$ is the viscous stress tensor given by
\begin{equation}\label{psi}
\Psi(\u)=2\mu \mathbb{D}(\u)+\lambda\dv\u \;\mathbf{I}_3, \quad \mathbb{D}(\u)=(\nabla\u+\nabla\u^\top)/2,
\end{equation}
where $\mathbf{I}_3$  denotes the $3\times 3$ identity matrix,
and $\nabla \u^\top$   the transpose of the matrix
$\nabla \u$. The pressure $P=P(\rho,\theta)$ and
the internal energy $e=e(\rho,\theta)$ are smooth functions of $\rho$ and $\theta$ of the flow, and satisfy the  Gibbs relation
\begin{equation}\label{gibbs}
\theta \mathrm{d}S=\mathrm{d}e +
P\,\mathrm{d}\left(\frac{1}{\rho}\right),
\end{equation}
for some smooth function (entropy) $S=S(\rho,\theta)$ which expresses the first law of the
thermodynamics.
The current density $\j$ is expressed by Ohm's law, i.e.,
\begin{align}\label{ohm}
\j-\rho_\textrm{e} \u =\sigma (\i+\mu_0\u\times \H),
\end{align}
where $\rho_\textrm{e} \u$ is called the convection current.
 The symbol $\Psi(\u):\nabla\u$ denotes the scalar product of two matrices:
\begin{equation}\label{psiu}
\Psi(\u):\nabla\u=\sum^3_{i,j=1}\frac{\mu}{2}\left(\frac{\partial
u_i}{\partial x_j} +\frac{\partial u_j}{\partial
x_i}\right)^2+\lambda|\dv\u|^2=
2\mu|\mathbb{D}(\u)|^2+\lambda|\mbox{tr}\mathbb{D}(\u)|^2.
\end{equation}
 The viscosity coefficients $\mu$ and $\lambda $ of the fluid satisfy
 $\mu>0$ and $2\mu+3\lambda>0$. The parameters  $\epsilon>0$ is the dielectric constant,
$\mu_0 >0$ the magnetic permeability, $\kappa>0$ the heat
conductivity, and $\sigma>0$ the electric conductivity coefficient, respectively.
In general, the conductivity $\sigma$  may be a tensor depending on
the present magnetic field. However, in this paper, we
shall suppose that the Hall effect is negligible and $\sigma$ is a scalar quantity. If
the Hall effect is to be taken into account, \eqref{ohm}  must be replaced by
\begin{align*}
\j-\rho_\textrm{e} \u =\sigma (\i+\mu_0\u\times \H)-\frac{\sigma\sigma_0\mu_0}{n_\mathrm{e} e_0}(\i+\mu_0\u\times \H)\times \H,
\end{align*}
where $\sigma_0$ is the electric conductivity in the absence of a magnetic field, $n_\mathrm{e}$ the
electron number density, and $e_0$ the charge of an electron.
 For simplicity, in the following consideration, we shall assume that $\mu,\lambda, \epsilon,\mu_0, \kappa$, and $\sigma$ are constants.

Mathematically, it is very difficult to study the properties of solutions to the electromagnetic fluid
system \eqref{faa}--\eqref{fag}.
The reason is that, as pointed out by Kawashima \cite{K},  the system of the electromagnetic quantities
$(\mathbf{E},\mathbf{H},\rho_\textrm{e})$ in the system \eqref{faa}--\eqref{fag}, which is regarded as a first-order
hyperbolic system, is neither symmetric hyperbolic nor strictly
hyperbolic in the three-dimensional case.  The same difficulty also occurs in the
first-order hyperbolic system of $(\mathbf{E},\mathbf{H})$ which is
obtained from the above system by eliminating $\rho_\textrm{e}$ with the aid of the first equation
of \eqref{fag}. Therefore, the classic hyperbolic-parabolic theory (for example \cite{VH}) can not
be applied here.
There are only a few mathematical results on the electromagnetic fluid  system\eqref{faa}--\eqref{fag}
in some special cases. Kawashima~\cite{K} obtained the global existence of smooth solutions
in the two-dimensional case when the initial data are a small perturbation of some given constant state.
  Umeda, Kawashima and Shizuta~\cite{UKS} obtained the
global existence and time decay of smooth solutions to the
linearized equations of the system \eqref{faa}--\eqref{fag} in the three-dimensional case near some given
constant equilibria. Based on the above arguments, it is desirable to introduce some simplifications without sacrificing
the essential feature of the phenomenon.

As it was pointed out in \cite{Im}, the
assumption that the electric charge density $\rho_\textrm{e}\simeq 0$ is physically very reasonable
for the study of plasmas. In this situation,
the convection current $\rho_{\textrm{e}}\u $ is negligible in comparison with the conduction current
$\sigma (\i+\mu_0\u\times \H)$, thus
 we can eliminate
the terms involving $\rho_\textrm{e}$ in the
electromagnetic fluid  system \eqref{faa}--\eqref{fag} and obtain the following simplified system:
\begin{align}
&\p_t \rho  +\dv(\rho\u)=0, \label{faaa} \\
&\rho(\p_t\u +  \u\cdot \nabla\u)+\nabla P
  = \dv\Psi(\u)+\mu_0\j\times \H, \label{fabb} \\
&\rho \frac{\p e}{\p \theta}(\p_t\theta+\u\cdot \nabla \theta)+\theta \frac{\p P}{\p \theta} \dv \u =\kappa\Delta \theta
 +\Psi(\u):\nabla\u+\j\cdot (\i+\mu_0\u\times \H) ,\label{facc}\\
&\epsilon\p_t\i-\cu \H+\j=0, \label{fadd}\\
&\p_t \H+\frac{1}{\mu_0 }\cu \i=0, \quad \dv \H=0,  \label{faee}
\end{align}
with
\begin{align}\label{ohmm}
\j =\sigma (\i+\mu_0\u\times \H).
\end{align}

 We remark here that the assumption $\rho_\textrm{e}\simeq 0$ is quite different from
the assumption of exact neutrality $\rho_\textrm{e}=0$, which would lead to the superfluous
condition $\textrm{div} \i=0$ by  \eqref{fag}.


Formally, if we take the  dielectric constant $\epsilon =0$ in
\eqref{fadd}, i.e. the displacement current $\epsilon \partial_t \i$ is negligible, then we
obtain that $\j=\cu \H$. Thanks to \eqref{ohmm}, we can eliminate the
electric field $\i$ in \eqref{fabb}, \eqref{facc}, and \eqref{faee} and finally obtain the system
\begin{align}
&\p_t \rho  +\dv(\rho\u)=0, \label{fabba} \\
&\rho(\p_t\u +  \u\cdot \nabla\u)+\nabla P
  = \dv\Psi(\u)+\mu_0 \cu \H\times \H, \label{fabbb} \\
&\rho \frac{\p e}{\p \theta}(\p_t\theta+u\cdot \nabla \theta)+\theta \frac{\p P}{\p \theta} \dv \u =\kappa\Delta   \theta
 +\Psi(\u):\nabla\u+\frac{1}{\sigma }|\cu \H|^2, \label{fabbc}\\
&\partial_t \H -\cu(\u\times\H)= -\frac{1}{\sigma\mu_0}\cu (\cu\H ),\quad \dv\H=0.\label{fabbd}
\end{align}
The equations \eqref{fabba}--\eqref{fabbd}  are the so-called full
compressible magnetohydrodynamic equations, see \cite{EM,KL,LL}.
It should be pointed that although it has been completely eliminated in the limit equations
\eqref{fabba}--\eqref{fabbd}, the electric field $\i$ still plays an essentially important role in the phenomena
under consideration. In fact, it determines the electric current $\sigma (\i+\mu_0\u\times \H)$ which
generates the magnetic field $\H$. The electric field $\i$ and the magnetic field $\H$ satisfy the relation
$$\i=\frac{1}{\sigma}\cu \H-\mu_0\u\times \H.$$

The above formal derivation is usually referred as magnetohydrodynamic
approximation, see \cite{Im,EM}. In \cite{KS1}, Kawashima and
Shizuta justified this limit process rigorously  in the two-dimensional case  for local smooth solutions, i.e., $\u=(u_1,u_2,0)$, $\i=(0,0,E_3)$,
  and $\H=(H_1,
  $ $H_2,0)$ with spatial variable $x=(x_1,x_2)\in \mathbb{R}^2$. In this situation, we can obtain
  that $\rho_{\textrm{e}}=0$ and the system \eqref{faa}--\eqref{fag} is reduced to \eqref{faaa}--\eqref{ohmm}.
Later, in \cite{KS2}, they also obtained the global convergence of
the limit in the two-dimensional case under the assumption that both
the initial data of the electromagnetic fluid equations and those of the
compressible magnetohydrodynamic equations are a small perturbation
of some given constant state in some Sobolev spaces in which the global
smooth solution can be obtained. Recently, we studied the magnetohydrodynamic
approximation for the isentropic electromagnetic fluid system in a three-dimensional period domain and deduced the
isentropic compressible magnetohydrodynamic
equations \cite{JL}.


 The purpose of this paper is to give a  rigorous derivation of the full
  compressible magnetohydrodynamic equations \eqref{fabba}--\eqref{fabbd} from the
  electromagnetic fluid
  system \eqref{faaa}--\eqref{ohmm} as the dielectric constant $\epsilon$ tends to
  zero.
 For the sake of simplicity and clarity of presentation, we shall focus on the ionized fluids obeying the perfect gas relations
\begin{align} \label{hpg}
P=\mathfrak{R}\rho \theta,\quad e=c_V\theta,
\end{align}
where the parameters $\mathfrak{R}>0$ and $ c_V\!>\!0$ are the gas constant and
the heat capacity at constant volume, respectively.
 We consider the system \eqref{faaa}--\eqref{ohmm} in a periodic domain of $\mathbb{R}^3$, i.e., the torus
$\mathbb{T}^3=(\mathbb{R}/(2\pi \mathbb{Z}))^3$.


Below for simplicity of presentation, we take the physical constants
$\mathfrak{R}, c_V, \sigma$, and $\mu_0$ to be one.
To emphasize the unknowns depending  on the small parameter
$\epsilon$, we rewrite the electromagnetic fluid system
\eqref{faaa}--\eqref{ohmm} as
\begin{align}
 &\partial_t \rho^\epsilon  +\dv(\rho^\epsilon\u^\epsilon)=0, \label{fya} \\
&\rho^\epsilon(\p_t\u^\epsilon +  \u^\epsilon\cdot \nabla\u^\epsilon)+\nabla (\rho^\epsilon\theta^\epsilon)
  = \dv\Psi(\u^\epsilon)+(\i^\epsilon+\u^\epsilon\times \H^\epsilon)\times \H^\epsilon, \label{fyb} \\
&\rho^\epsilon  (\p_t\theta^\epsilon+\u^\epsilon\cdot \nabla \theta^\epsilon)+\rho^\epsilon\theta^\epsilon  \dv \u^\epsilon   =\kappa\Delta  \theta^\epsilon
 +\Psi(\u^\epsilon):\nabla\u^\epsilon +  |\i^\epsilon+\u^\epsilon\times \H^\epsilon|^2 ,\label{fyc}\\
&\epsilon \partial_t \i^\epsilon-\cu\H^\epsilon + \i^\epsilon+\u^\epsilon\times \H^\epsilon=0,\label{fyd}\\
&\partial_t\H^\epsilon +\cu \i^\epsilon=0, \quad \dv\H^\epsilon=0, \label{fye}
\end{align}
where  $\Psi(\u^\epsilon)$ and $\Psi(\u^\epsilon):\nabla\u^\epsilon$
are defined through \eqref{psi} and \eqref{psiu} with $\u$ replaced by $\u^\epsilon$.
The system \eqref{fya}--\eqref{fye} is supplemented with the initial data
\begin{align}\label{fyf}
 (\rho^\epsilon, \u^\epsilon, \theta^\epsilon, \i^\epsilon,\H^\epsilon)|_{t=0}
 =( \rho_0^\epsilon(x), \u_0^\epsilon(x),\theta_0^\epsilon(x), \i_0^\epsilon(x),\H_0^\epsilon(x)), \quad x\in \mathbb{T}^3.
\end{align}

We also rewrite the limiting equations \eqref{fabba}--\eqref{fabbd} (recall that $\mathfrak{R}=c_V=\sigma=\mu_0=1  $) as
\begin{align}
&\partial_t \rho^0  +\dv(\rho^0\u^0)=0, \label{fza} \\
&\rho^0(\p_t\u^0 +  \u^0\cdot \nabla\u^0)+\nabla (\rho^0\theta^0)
  = \dv\Psi(\u^0)+\cu \H^0\times \H^0, \label{fzb} \\
&\rho^0  (\p_t\theta^0+\u^0\cdot \nabla \theta^0)+\rho^0\theta^0  \dv \u^0   =\kappa\Delta  \theta^0
 +\Psi(\u^0):\nabla \u^0+  |\cu \H^0|^2 ,\label{fzc}\\
&\partial_t \H^0 - \cu(\u^0\times\H^0)=-\cu(\cu\H^0),\quad \dv\H^0=0, \label{fzd}
\end{align}
where  $\Psi(\u^0)$ and $\Psi(\u^0):\nabla\u^0$
are defined through \eqref{psi} and \eqref{psiu} with $\u$ replaced by $\u^0$.
The system  \eqref{fza}--\eqref{fzd} is  equipped with the  initial data
\begin{align}\label{fze}
 (\rho^0, \u^0,\theta^0, \H^0)|_{t=0}
 =( \rho^0_0(x), \u^0_0(x), \theta_0^0(x), \H_0^0(x)), \quad x\in \mathbb{T}^3.
\end{align}

Notice that  the  electric field $\i^0$ is induced according to the relation
\begin{equation}\label{Ohm}
\i^0=\cu\H^0 - \u^0\times\H^0
\end{equation}
by moving the conductive flow in the magnetic field.

Before stating our main results, we recall the local existence  of smooth solutions to the problem \eqref{fza}--\eqref{fze}.
Since the system \eqref{fza}--\eqref{fzd} is parabolic-hyperbolic, the results in \cite{VH} imply that
 \begin{prop}[\cite{VH}]\label{Pa} Let $s>7/2$ be an integer and
 assume that the initial data $(\rho^0_0, \u^0_0,\theta^0_0,  \H_0^0)$ satisfy
\begin{gather*}
 \rho^0_0, \u^0_0, \theta^0_0, \H_0^0\in H^{s+2}(\mathbb{T}^3), \ \ \dv \H^0_0 =0,\nonumber\\
    0<\bar \rho= \inf_{x\in \mathbb{T}^3}\rho^0_0(x)\leq \rho^0_0(x)\leq
    \bar {\bar \rho}= \sup_{x\in \mathbb{T}^3}\rho^0_0(x)<+\infty,\\
    0<\bar \theta= \inf_{x\in \mathbb{T}^3}\theta^0_0(x)\leq \theta^0_0(x)\leq
    \bar {\bar \theta}= \sup_{x\in \mathbb{T}^3}\theta^0_0(x)<+\infty
\end{gather*}
 for some positive constants $\bar \rho, \bar{\bar\rho}, \bar \theta$, and $\bar{\bar\theta}$. Then there exist positive
 constants $T_*\,($the maximal time interval, $ 0<T_*\leq +\infty )$, and $\hat \rho, \tilde{\rho}, \hat \theta, \tilde{\theta} $,
 such that the problem
\eqref{fza}--\eqref{fze} has a unique classical solution $(\rho^0,\u^0,\theta^0,\H^0)$ satisfying $\dv  \H^0=0$ and
\begin{gather*}
   \rho^0 \in C^l([0,T_*),H^{s+2-l}(\mathbb{T}^3)),\, \,\, \u^0, \theta^0,
\H^0 \in  C^l([0,T_*),H^{s+2-2l}(\mathbb{T}^3)), \ \   l=0,1;\ \
 \\
    0<\hat \rho= \inf_{(x,t)\in \mathbb{T}^3\times [0,T_*)}\rho^0(x,t)\leq  \rho^0(x,t)\leq
    {\tilde \rho}= \sup_{(x,t)\in \mathbb{T}^3\times [0,T_*)}\rho^0(x,t)<+\infty,\\
       0<\hat \theta= \inf_{(x,t)\in \mathbb{T}^3\times [0,T_*)}\theta^0(x,t)\leq \theta^0(x,t)\leq
    {\tilde \theta}= \sup_{(x,t)\in \mathbb{T}^3\times [0,T_*)}\theta^0(x,t)< +\infty.
\end{gather*}

 \end{prop}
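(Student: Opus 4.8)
The plan is to put the parabolic--hyperbolic system \eqref{fza}--\eqref{fzd} into the form treated in \cite{VH} and then quote the abstract local existence theorem there. In the unknown $U=(\rho^0,\u^0,\theta^0,\H^0)$, equation \eqref{fza} is a scalar transport (hence trivially symmetric hyperbolic) equation for $\rho^0$ in which the coupling to $\u^0$ enters only through $\dv\u^0$, while \eqref{fzb}, \eqref{fzc} and \eqref{fzd} are quasilinear parabolic equations for $\u^0,\theta^0,\H^0$: indeed $\dv\Psi(\u^0)=\mu\Delta\u^0+(\mu+\lambda)\nabla\dv\u^0$ is strongly elliptic because $\mu>0$ and $2\mu+3\lambda>0$ force $\min(\mu,2\mu+\lambda)>0$, $\kappa\Delta\theta^0$ is elliptic, and $-\cu(\cu\H^0)=\Delta\H^0-\nabla\dv\H^0=\Delta\H^0$ once one uses the constraint $\dv\H^0=0$; moreover this constraint propagates in time, since taking the divergence of \eqref{fzd} and using $\dv\,\cu=0$ gives $\p_t(\dv\H^0)=0$. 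After dividing \eqref{fzb} and \eqref{fzc} by $\rho^0$, all principal coefficients depend only on $\rho^0$ and $\theta^0$ and are, on the range $0<\bar\rho\le\rho^0\le\bar{\bar\rho}$, $0<\bar\theta\le\theta^0\le\bar{\bar\theta}$, uniformly positive definite/elliptic, so the structural hypotheses of \cite{VH} are met (one also uses $s>7/2$, so that $H^s$ and $H^{s+1}$ are Banach algebras continuously embedded in $L^\infty$).

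Granting this, the existence of a unique classical solution with the asserted regularity is exactly the conclusion of the theory of \cite{VH}. Concretely one builds approximations $U^{(n)}$ by linearization: given $U^{(n)}$, obtain $\rho^{(n+1)}$ from the linear transport equation by the method of characteristics --- which simultaneously yields its pointwise bounds --- and obtain $\u^{(n+1)},\theta^{(n+1)},\H^{(n+1)}$ from three decoupled linear parabolic problems (the linearized \eqref{fzd} again preserves $\dv\H^{(n+1)}=0$, so it is a genuine heat equation). One then proves $n$-uniform a priori bounds --- $\rho^{(n+1)}$ in $C^l([0,T_*);H^{s+2-l})$, $l=0,1$, by the transport energy estimate, and $(\u^{(n+1)},\theta^{(n+1)},\H^{(n+1)})$ in $C^l([0,T_*);H^{s+2-2l})\cap L^2(0,T_*;H^{s+3-2l})$, $l=0,1$, by parabolic regularity --- estimating the nonlinear right-hand sides by Moser-type (Gagliardo--Nirenberg) inequalities and closing the bounds by a Gr\"onwall argument on a time interval $[0,T_*)$ independent of $n$; the sequence $\{U^{(n)}\}$ is then shown to be Cauchy in the lower norm $L^\infty(0,T_*;L^2)$, which produces the solution, and uniqueness follows from the analogous energy estimate for the difference of two solutions. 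The pointwise bounds $0<\hat\rho\le\rho^0\le\tilde\rho$ follow from the representation $\rho^0(x,t)=\rho^0_0(X(0;t,x))\exp\!\big(-\!\int_0^t(\dv\u^0)(X(\tau;t,x),\tau)\,\dif\tau\big)$ along the flow $X$ of $\u^0$ together with $\dv\u^0\in L^1(0,T_*;L^\infty)$, and $0<\hat\theta\le\theta^0\le\tilde\theta$ follows from the maximum principle applied to \eqref{fzc} rewritten as $\p_t\theta^0+\u^0\cdot\nabla\theta^0+\theta^0\dv\u^0=(\kappa/\rho^0)\Delta\theta^0+(1/\rho^0)\big(\Psi(\u^0):\nabla\u^0+|\cu\H^0|^2\big)$, whose extra source terms are nonnegative and whose zeroth-order coefficient $\dv\u^0$ is bounded.

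The step I expect to be the main obstacle is the closure of the $n$-uniform a priori estimates, specifically the interaction at top derivative order between the hyperbolic variable $\rho^0$ and the parabolic ones. Differentiating \eqref{fza} to order $s+2$ produces a term of the type $\rho^0\,\dv(\pa\u^0)$, which involves $s+3$ derivatives of $\u^0$ and hence cannot be bounded in $L^\infty_t$; it must instead be absorbed using the $L^2(0,T_*;H^{s+3})$-gain supplied by the parabolic dissipation of $\u^0$, while the remaining commutators $[\pa,\u^0\cdot\nabla]\rho^0$ and their analogues in \eqref{fzb}--\eqref{fzd} are handled by Kato--Ponce type commutator estimates; one must also carry lower-order bounds for $1/\rho^0$ and $1/\theta^0$ so that the Moser estimates on quotients such as $\frac1{\rho^0}\nabla(\rho^0\theta^0)$, $\frac1{\rho^0}\dv\Psi(\u^0)$ and $(\kappa/\rho^0)\Delta\theta^0$ close. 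Since all of this is precisely the structural content of the general theory in \cite{VH}, once symmetrizability and the propagation of $\dv\H^0=0$ have been checked the proof reduces to invoking that theory.
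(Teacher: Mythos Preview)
Your proposal is correct and matches the paper's approach: the paper does not prove this proposition but simply observes that the system \eqref{fza}--\eqref{fzd} is hyperbolic--parabolic and invokes \cite{VH}, which is exactly what you do after verifying the structural hypotheses (symmetrizability of the transport part, strong ellipticity of the dissipative part via $\mu>0$, $2\mu+3\lambda>0$, and propagation of $\dv\H^0=0$). Your additional sketch of the iteration scheme, the characteristics formula for the density bounds, and the maximum-principle argument for the temperature bounds are standard supplements that the paper omits entirely, so you have in fact supplied more than the paper does.
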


\smallskip
The  main results of this paper can be stated as follows.
\begin{thm}\label{th}
Let $s> 7/2$ be an integer and $(\rho^0, \u^0,\theta^0, \H^0)$
be the unique classical solution to the problem \eqref{fza}--\eqref{fze} given in Proposition \ref{Pa}.
 Suppose that the initial data $(\rho^\epsilon_0, \u^\epsilon_0,\theta_0^\epsilon, \i_0^\epsilon,
\H_0^\epsilon)$ satisfy
$$
 \rho^\epsilon_0, \u^\epsilon_0,\theta_0^\epsilon,  \i^\epsilon_0, \H^\epsilon_0\in H^{s}(\mathbb{T}^3), \  \dv \H^\epsilon_0 =0,
 \  \inf_{x\in \mathbb{T}^3}
   \rho^\epsilon_0(x)>0, \      \inf_{x\in \mathbb{T}^3}
  \theta^\epsilon_0(x)>0,
 $$
and
\begin{align}
&  \Vert (\rho^\epsilon_0-\rho^0_0, \u^\epsilon_0-\u^0_0, \theta^\epsilon_0-\theta^0_0,
\H_0^\epsilon-\H_0^0) \Vert_{s}\nonumber\\
&  \qquad \quad
+  \sqrt{\epsilon}\left\Vert\i^\epsilon_0- ( \cu\H^0_0
- \u^0_0\times\H^0_0 )\right\Vert_{s} \leq  L_0 {\epsilon}, \label{ivda}
 \end{align}
for some constant $L_0>0$. Then, for any $T_0\in (0,T_* )$, there exist a constant $L>0$, and
a sufficient small constant $\epsilon_0>0$, such that for any $\epsilon\in
(0,\epsilon_0]$, the problem \eqref{fya}--\eqref{fyf} has a unique smooth solution $(\rho^\epsilon,
\u^\epsilon, \theta^\epsilon, \i^\epsilon,\H^\epsilon)$ on $[0,T_0]$ enjoying
\begin{align}\label{iivda}
  &  \Vert (\rho^\epsilon-\rho^0, \u^\epsilon-\u^0, \theta^\epsilon-\theta^0,\H^\epsilon-\H^0)(t)
\Vert_{s} \nonumber\\
&  \qquad \quad +  \sqrt{\epsilon}\left\Vert\left\{\i^\epsilon- ( \cu\H^0
- \u^0\times\H^0 )\right\}(t)\right\Vert_{s} \leq L {\epsilon},  \ \ t\in [0,T_0].
 \end{align}
 Here $\|\cdot\|_{s}$ denotes the norm of Sobolev space $H^s(\mathbb{T}^3)$.
\end{thm}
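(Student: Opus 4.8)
The plan is to derive a uniform‑in‑$\epsilon$ estimate for the difference of the two solutions and to close it by a continuity argument. Write $\varrho=\rho^\epsilon-\rho^0$, $\w=\u^\epsilon-\u^0$, $\vartheta=\theta^\epsilon-\theta^0$, $\mathbf{B}=\H^\epsilon-\H^0$ and $\F=\i^\epsilon-\i^0$, where $\i^0=\cu\H^0-\u^0\times\H^0$ is the electric field \eqref{Ohm} attached to the limit solution of Proposition \ref{Pa} on $[0,T_0]$. For fixed $\epsilon>0$ the system \eqref{fya}--\eqref{fye} is symmetric hyperbolic--parabolic: the fluid block carries the usual symmetrizer of the compressible Navier--Stokes--Fourier equations, the Maxwell block $\epsilon\p_t\i^\epsilon-\cu\H^\epsilon+\i^\epsilon+\u^\epsilon\times\H^\epsilon=0$, $\p_t\H^\epsilon+\cu\i^\epsilon=0$ is symmetric hyperbolic, and the coupling is of order zero; hence \cite{VH} gives a unique local smooth solution on a maximal interval $[0,T^\epsilon)$, with $\dv\H^\epsilon\equiv0$ (so $\dv\mathbf{B}\equiv0$) since $\p_t\dv\H^\epsilon=-\dv\cu\i^\epsilon=0$. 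It therefore suffices to prove \eqref{iivda} a priori on $[0,T_0]$, which at once forces $T^\epsilon>T_0$ and yields the rate. I would assume \eqref{iivda} holds with $L$ replaced by $2L$ on some $[0,T]\subseteq[0,T_0]$ — so that, for $\epsilon_0$ small, $\rho^\epsilon,\theta^\epsilon$ stay positive and $\rho^\epsilon,\theta^\epsilon,\H^\epsilon,\i^\epsilon$ stay bounded in $H^s$ uniformly in $\epsilon$, whence the solution persists on $[0,T]$ — and improve the bound to $L$.

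\noindent\textbf{The error system.} Subtracting \eqref{fza}--\eqref{fzd} from \eqref{fya}--\eqref{fye} and using $\i^0=\cu\H^0-\u^0\times\H^0$, the differences solve a system that is linear in $(\varrho,\w,\vartheta,\mathbf{B},\F)$ with coefficients built from both solutions; in particular
\[
\p_t\mathbf{B}+\cu\F=0,\qquad \epsilon\p_t\F-\cu\mathbf{B}+\F+\big(\u^\epsilon\times\H^\epsilon-\u^0\times\H^0\big)=-\epsilon\,\p_t\i^0,
\]
while the $(\varrho,\w,\vartheta)$‑equations are the error equations of the compressible Navier--Stokes--Fourier system with the Lorentz and Joule differences $(\i^\epsilon+\u^\epsilon\times\H^\epsilon)\times\H^\epsilon-\cu\H^0\times\H^0$ and $|\i^\epsilon+\u^\epsilon\times\H^\epsilon|^2-|\cu\H^0|^2$ as inhomogeneities. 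The decisive observation is that inserting the limit solution into the $\epsilon$‑system produces exactly one residual, the term $-\epsilon\,\p_t\i^0$; this is the source of the $O(\epsilon)$ in \eqref{iivda}. I would also record that $\p_t\i^0\in L^2(0,T_0;H^s(\mathbb{T}^3))$: the induction equation and maximal parabolic regularity give $\p_t\H^0\in L^2(0,T_0;H^{s+1})$ (since $\H^0_0\in H^{s+2}$ and $\cu(\u^0\times\H^0)\in L^\infty(0,T_0;H^{s+1})$), so $\p_t\i^0=\cu\p_t\H^0-\p_t(\u^0\times\H^0)\in L^2(0,T_0;H^s)$.

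\noindent\textbf{The uniform estimate.} For $|\alpha|\le s$ I would apply $\pa$ to each error equation and pair it — for the fluid variables in the inner product weighted by the symmetrizer of the compressible Navier--Stokes--Fourier system, and for $(\mathbf{B},\F)$ with weights $1$ and $\epsilon$ — with $\pa\varrho$, $\pa\w$, $\pa\vartheta$, $\pa\mathbf{B}$, $\pa\F$, respectively. Let $\E(t)$ denote the resulting total energy, equivalent to $\|(\varrho,\w,\vartheta,\mathbf{B})(t)\|_s^2+\epsilon\|\F(t)\|_s^2$. Summing over $\alpha$ and using (i) the standard symmetrization of the density/pressure coupling (the $\nabla\varrho$, $\nabla\vartheta$ terms of the momentum equation against $\rho^0\dv\w$, $\rho^0\theta^0\dv\w$ of the continuity and temperature equations), (ii) the skew‑adjointness $\int\cu\mathbf{a}\cdot\mathbf{b}=\int\mathbf{a}\cdot\cu\mathbf{b}$, which cancels the $\cu$‑terms in the $(\mathbf{B},\F)$‑pair and leaves the damping $\|\F\|_s^2$, (iii) the parabolic gains $\|\nabla\w\|_s^2$, $\|\nabla\vartheta\|_s^2$ from $\dv\Psi$ and $\kappa\Delta$, and (iv) Moser‑type commutator estimates (legitimate since $s>7/2$), I expect to arrive at
\[
\frac{d}{dt}\E(t)+c_0\big(\|\nabla\w\|_s^2+\|\nabla\vartheta\|_s^2+\|\F\|_s^2\big)\le C\big(\E(t)+\epsilon^2\big),
\]
with $c_0,C>0$ depending only on $T_0$ and the limit solution; the residual contributes here $\epsilon\int\pa\p_t\i^0\cdot\pa\F\le\tfrac{c_0}{8}\|\F\|_s^2+C\epsilon^2\|\p_t\i^0\|_s^2$, whose time integral is $O(\epsilon^2)$ by the previous paragraph.

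\noindent\textbf{The main obstacle and conclusion.} The hard part will be the electromagnetic--fluid coupling terms in which a top‑order derivative $\pa\F$, $|\alpha|=s$, cannot be cancelled — e.g.\ the Lorentz term $\int(\pa\F\times\H^\epsilon)\cdot\pa\w$ in the momentum estimate and the Joule term $\int\big(\pa\F\cdot(\i^\epsilon+\i^0)\big)\pa\vartheta$ in the temperature estimate; a crude bound costs $\|\F\|_s^2$, which the a priori budget $\|\F\|_s=O(\sqrt\epsilon)$ does not control at order $\epsilon^2$. My remedy is to integrate by parts, moving a derivative from $\F$ onto $\w$ or $\vartheta$, to absorb the ensuing $\|\nabla\w\|_s^2$, $\|\nabla\vartheta\|_s^2$ into the viscous and heat dissipation, and to invoke the improved bound $\|\F\|_{s-1}=O(\epsilon)$, which follows by solving the $\F$‑equation algebraically, $\F=\cu\mathbf{B}-(\u^\epsilon\times\H^\epsilon-\u^0\times\H^0)-\epsilon\,\p_t\i^\epsilon$, together with $\|\mathbf{B}\|_s,\|\w\|_s=O(\epsilon)$ and the uniform $H^{s-1}$‑bound on $\epsilon\p_t\i^\epsilon=\cu\H^\epsilon-\i^\epsilon-\u^\epsilon\times\H^\epsilon$ coming from \eqref{fyd}; these terms then contribute only $\tfrac{c_0}{8}(\|\nabla\w\|_s^2+\|\nabla\vartheta\|_s^2)+C(\E(t)+\epsilon^2)$. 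With the differential inequality in hand, \eqref{ivda} gives $\E(0)\le CL_0^2\epsilon^2$, Gronwall yields $\E(t)\le C(L_0^2+1)e^{CT_0}\epsilon^2$ on $[0,T]$, and choosing $L$ with $L^2>2C(L_0^2+1)e^{CT_0}$ and then $\epsilon_0$ small closes the continuity argument, giving $T=T_0$ and \eqref{iivda}. In short, the crux is the interplay between the singular term $\epsilon\p_t\F$ — which forces an $O(\epsilon)$ residual whose energetic cost must be shown to be only $O(\epsilon^2)$ — and the fact that the electromagnetic block carries no dissipation beyond the damping of $\F$ (none for $\mathbf{B}$), so that every top‑order electromagnetic--fluid interaction has to be rerouted, via integration by parts and the $H^{s-1}$‑versus‑$H^s$ gap for $\F$, into the parabolic dissipation of the fluid unknowns.
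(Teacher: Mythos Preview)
Your overall architecture --- error system, $\epsilon$-weighted energy $\E(t)\sim\|(\varrho,\w,\vartheta,\mathbf B)\|_s^2+\epsilon\|\F\|_s^2$, Maxwell damping $\|\F\|_s^2$, parabolic dissipation $\|\nabla\w\|_s^2+\|\nabla\vartheta\|_s^2$, Gronwall, continuation --- coincides with the paper's. The one substantive divergence is in your ``main obstacle'' paragraph, and there the paper takes a simpler route while your proposed remedy has a gap.

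\medskip
\noindent\textbf{The obstacle is not an obstacle.} You worry that a crude bound on $\int(\pa\F\times\H^\epsilon)\cdot\pa\w$ or on the Joule analogue ``costs $\|\F\|_s^2$'' and that this cannot be paid for. But you already have $c_0\|\F\|_s^2$ on the left of your differential inequality, coming from the damping in the Maxwell block. The paper simply uses weighted Cauchy--Schwarz,
\[
\Big|\big\langle\pa\F\times\tfrac{\H^0}{N^\epsilon+\rho^0},\pa\U^\epsilon\big\rangle\Big|\le \tfrac{1}{16}\|\pa\F\|^2+C\|\pa\U^\epsilon\|^2,
\]
and likewise for the $\G^\epsilon$- and Joule-type pieces, choosing the small constants so that the total coefficient in front of $\|\F\|_s^2$ on the right is strictly less than the damping coefficient on the left (the paper ends with $\tfrac14\|\F\|_s^2$ surviving on the left). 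No integration by parts, no lower-order norm of $\F$, no bootstrap on $\|\F\|_{s-1}$ is needed here. The only term that requires an extra thought is the quadratic $|\F|^2$ piece of the Joule difference; the paper bounds its top-order contribution by $\gamma_1\|\F\|_s^4+C_{\gamma_1}\|\pa\Theta^\epsilon\|^2$ and absorbs $\gamma_1\|\F\|_s^4$ using the smallness of $\|\F\|_s^2$ furnished by the a priori bound.

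\medskip
\noindent\textbf{The gap in your remedy.} Your alternative route hinges on the pointwise claim $\|\F(t)\|_{s-1}=O(\epsilon)$, obtained by writing $\F=\cu\mathbf B-(\u^\epsilon\times\H^\epsilon-\u^0\times\H^0)-\epsilon\,\p_t\i^\epsilon$. The first two pieces are indeed $O(\epsilon)$ in $H^{s-1}$ under your bootstrap, but $\epsilon\,\p_t\i^\epsilon=\cu\H^\epsilon-\i^\epsilon-\u^\epsilon\times\H^\epsilon$ is only uniformly \emph{bounded} in $H^{s-1}$, not $O(\epsilon)$; hence this identity yields $\|\F\|_{s-1}=O(1)$, not $O(\epsilon)$. (Rewriting with $\epsilon\,\p_t\F$ instead does not help: that term again contains $-\F$, and the argument becomes circular.) The energy estimate does give $\int_0^t\|\F\|_s^2\,d\tau=O(\epsilon^2)$, i.e.\ an $L^2$-in-time $O(\epsilon)$ bound on $\|\F\|_s$, but not the pointwise $H^{s-1}$ bound you invoke. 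So if you kept your integration-by-parts scheme you would have to redo this step; the cleaner fix is simply to absorb $\eta\|\F\|_s^2$ into the damping as the paper does.
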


\begin{rem}
  The inequality \eqref{iivda} implies that the sequences $(\rho^\epsilon, \u^\epsilon,\theta^\epsilon, \H^\epsilon)$
  converge strongly to $(\rho^0,\u^0,\theta^0,\H^0)$ in $L^\infty(0,T; H^{s}(\mathbb{T}^3))$ and
  $\i^\epsilon$ converge strongly to $\i^0$ in $L^\infty(0,T; H^{s}(\mathbb{T}^3))$  but with different convergence rates, where
$\i^0$ is defined by \eqref{Ohm}.
\end{rem}

\begin{rem}
  Theorem \ref{th}  still holds for the case with general state equations with minor modifications.
  Furthermore, our results also hold in the whole space $\mathbb{R}^3$.
  Indeed,  neither the compactness of $\mathbb{T}^3$ nor   Poincar\'{e}-type inequality is used in our arguments.
\end{rem}

\begin{rem} In the two-dimensional case,
our result is similar to that of \cite{KS1} (see Remark 5.1 of \cite{KS1}).  In addition, if we assume that
the initial data  are a small perturbation of some given constant state
in the Sobolev norm $H^s(\mathbb{T}^3)$ for $s>3/2+2$, we can extend the local convergence
result stated in Theorem \ref{th} to a global one.
\end{rem}

\begin{rem}
  For the local existence of solutions $(\rho^0,\u^0,\theta^0,\H^0)$ to the problem \eqref{fza}--\eqref{fze}, the
  assumption on the regularity of initial data $(\rho^0_0,\u^0_0,\theta^0_0,\H^0_0)$ belongs to
  $H^s(\mathbb{T}^3)$, $s>7/2$, is enough. Here we have added more regularity assumption in Proposition \ref{Pa} to
  obtain more regular solutions which are needed in the proof of Theorem \ref{th}.
\end{rem}


\begin{rem}
The viscosity and heat conductivity terms
in the system \eqref{fya}--\eqref{fye} play a crucial role in our uniformly bounded  estimates
(in order to control some undesirable higher-order terms).
In the case  of $\lambda=\mu=\kappa=0$, the original system \eqref{fya}--\eqref{fye} are reduced to the so-call non-isentropic
Euler-Maxwell system. Our arguments can not be applied to this case directly, for more details, see \cite{JL2}.
\end{rem}

We give some comments on the proof of Theorem \ref{th}. The main difficulty in dealing with the zero dielectric constant
limit problem is the oscillatory behavior of the electric field
as pointed out in \cite{JL}, besides the singularity in the Maxwell equations,
there exists an extra singularity caused by the strong coupling of
the electromagnetic field (the nonlinear source term) in the
momentum equation.
 Moreover, comparing to the isentropic case studied in \cite{JL}, we have to circumvent additional difficulties in the derivation of uniform estimates
induced by the nonlinear differential terms  (such as $\Psi(\u^\epsilon):\nabla\u^\epsilon$) and  higher order nonlinear terms
 (such as $ |\i^\epsilon+\u^\epsilon\times \H^\epsilon|^2$) involving $\u^\epsilon, \i^\epsilon$, and $\H^\epsilon $ in the temperature equation.
 In this paper, we shall
 overcome all these difficulties
and derive rigorously the full compressible
magnetohydrodynamic equations from the electromagnetic fluid equations
  by adapting the elaborate nonlinear energy method developed in
\cite{PW,JL}.
First, we derive the error system \eqref{error1}--\eqref{error4} by  utilizing  the original system  \eqref{fya}--\eqref{fye}
and the limit equations \eqref{fza}--\eqref{fzd}. Next, we study the estimates of $H^s$-norm to the error system.
 To do so, we shall make full use of  the special structure of the error system,  Sobolev imbedding, the Moser-type
inequalities, and the regularity of limit equations.
In particular,   very refined analyses are carried out to
deal with the higher order nonlinear terms in the system \eqref{error1}--\eqref{error4}.
Finally, we combine these obtained estimates and apply
Gronwall's type inequality to get the desired results. We remark that in the isentropic case in \cite{JL},
the density is controlled by the pressure, while in our case the density is controlled through the
viscosity terms in the momentum equations.



It should be pointed out that there are a lot of works on the studies of
compressible  magnetohydrodynamic equations by physicists and mathematicians
due to its physical importance, complexity, rich phenomena, and mathematical
challenges. Below we just mention some mathematical results on the full
 compressible magnetohydrodynamic equations \eqref{fabba}--\eqref{fabbd}, we refer the interested reader to
\cite{C,KL,LL,PD} for many discussions on  physical aspects.
For the one-dimensional planar compressible magnetohydrodynamic equations, the  existence
of global smooth solutions  with small initial data was shown in \cite{KO}. In \cite{TH,HT},
Hoff and Tsyganov obtained the global existence and uniqueness of weak solutions with small initial energy.
Under some technical conditions on the heat conductivity coefficient, Chen and Wang  \cite{CWa,CWb,W} obtained the existence,
uniqueness, and Lipschitz continuous dependence of global strong solutions with large initial data, see also
\cite{FJNa,FJNb} on the global existence and uniqueness of global weak solutions,
and \cite{FHL} on the global existence and uniqueness of large strong solutions with large initial data and vaccum.
For the full multi-dimensional compressible magnetohydrodynamic equations, the existence of variational
solutions was established in \cite{DF,FYa,HW2}, while a unique local strong solution was obtained in \cite{FYb}.
The low Mach number limit is a very interesting topic in magnetohydrodynamics, see \cite{K10,JLL,KT,NRT}
in the framework of the so-called variational solutions, and \cite{JJL3,JJL4,JJLX} in the framework of the local
smooth solutions with small density and temperature variations, or large density/entropy and temperature variations.

\medskip
Before ending this introduction, we give some notations and recall some basic facts which
will be frequently used throughout this paper.

(1) We denote by $\langle \cdot,\cdot\rangle$ the standard inner product in $L^2(\mathbb{T}^3)$
with $\langle f,f\rangle=\|f\|^2$, by
$H^k$ the standard Sobolev space $W^{k,2}$ with norm $\|\cdot\|_{k}$.  The notation $\|(A_1,A_2, \dots,
 A_l)\|_k$ means the summation of $\|A_i\|_k$ from $i=1$ to $i=l$.
For a multi-index $\alpha = (\alpha_1,  \alpha_2, \alpha_3)$,  we denote
$\partial_x^\alpha =\partial^{\alpha_1}_{x_1}\partial^{\alpha_2}_{x_2}\partial^{\alpha_3}_{x_3}$ and
$|\alpha|=|\alpha_1|+|\alpha_2|+|\alpha_3|$. For an integer $m$, the symbol $D^m_x$ denotes
the summation of all terms $\partial_x^\alpha$ with the multi-index $\alpha$ satisfying $|\alpha|=m$. We use $C_i$,
$\delta_i$, $K_i$, and $K$ to denote the constants which are independent of
$\epsilon$ and may change from line to line. We also omit the  spatial domain $\mathbb{T}^3$
in integrals for convenience.

(2) We shall frequently use the following Moser-type calculus
inequalities (see \cite{KM1}):

\hskip 4mm (i)\ \ For $f,g\in H^s(\mathbb{T}^3)\cap L^\infty(\mathbb{T}^3)$ and $|\alpha|\leq
s$, $s>3/2$, it holds that
\begin{align}\label{ma}
\|\partial^\alpha_x(fg)\| \leq C_s(\|f\|_{L^\infty}\|D^s_x
g\| +\|g\|_{L^\infty}\|D^s_x f\|).
\end{align}

\hskip 4mm (ii)\ \ For $f\in H^s(\mathbb{T}^3), D_x^1 f\in L^\infty(\mathbb{T}^3), g\in H^{s-1}(\mathbb{T}^3)\cap
L^\infty(\mathbb{T}^3)$ and $|\alpha|\leq s$, $s>5/2$, it holds that
\begin{align}\label{mb}
\quad  \ \ \|\partial^\alpha_x(fg)-f \partial^\alpha_xg\|\leq
C_s(\|D^1_x f\|_{L^\infty}\|D^{s-1}_x g\| +\|g\|_{L^\infty}\|D^s_xf\|). 
\end{align}

(3) Let $s> 3/2$, $f\in C^s(\mathbb{T}^3)$, and  $u\in H^s(\mathbb{T}^3)$, then for each multi-index $\alpha$, $1\leq |\alpha| \leq s$, we have
(\cite{Mo,KM1}):
\begin{align}\label{mo}
   \|\partial^\alpha_x (f(u))\| \leq C(1+\|u\|_{L^\infty}^{|\alpha|-1})\|u\|_{|\alpha|};
\end{align}
moreover, if $f(0)=0$, then (\cite{Ho97})
\begin{align}\label{ho}
 \|\partial^\alpha_x (f(u))\| \leq C( \|u\|_s)\|u\|_s.
\end{align}

This paper is organized as follows. In Section \ref{S2}, we utilize the primitive system \eqref{fya}--\eqref{fye} and the
target system \eqref{fza}--\eqref{fzd} to  derive the  error
system and state the local existence of the solution.
  In Section \ref{S3} we give the a priori energy
estimates of the error system  and present  the proof of Theorem \ref{th}.

\renewcommand{\theequation}{\thesection.\arabic{equation}}
\setcounter{equation}{0}
\section{Derivation of the  error system and  local existence} \label{S2}

In this section  we first derive the error system from the original
system \eqref{fya}--\eqref{fye} and the limiting equations
\eqref{fza}--\eqref{fzd}, then we state the local existence of solution to
this error system.

Setting $N^\epsilon=\rho^\epsilon-\rho^0,   \U^\epsilon=\u^\epsilon-\u^0, \Theta^\epsilon=\theta^\epsilon-\theta^0,
\F^\epsilon=\i^\epsilon-\i^0$, and $ \G^\epsilon=\H^\epsilon-\H^0$, and utilizing
      the system
\eqref{fya}--\eqref{fye} and the system \eqref{fza}--\eqref{fzd} with \eqref{Ohm}, we
  obtain that
 \begin{align}
 &  \partial_t N^\epsilon  +(N^\epsilon+\rho^0)\dv \U^\epsilon+(\U^\epsilon+\u^0)\cdot\nabla N^\epsilon
 =-N^\epsilon \dv \u^0-\nabla \rho^0\cdot \U^\epsilon, \label{error1} \\
 & \partial_t\U^\epsilon  +[(\U^\epsilon+\u^0)\cdot \nabla]\U^\epsilon
 +\nabla \Theta^\epsilon+\frac{\Theta^\epsilon+\theta^0}{N^\epsilon+\rho^0}\nabla N^\epsilon-\frac{1}{N^\epsilon+\rho^0}\dv\Psi(\U^\epsilon)\nonumber\\
  & \quad\qquad   =-(\U^\epsilon \cdot \nabla )\u^0-
                  \left[\frac{\Theta^\epsilon+\theta^0}{N^\epsilon+\rho^0}-\frac{\theta^0}{\rho^0}\right]\nabla  \rho^0
                       +\left[\frac{1}{N^\epsilon+\rho^0}-\frac{1}{\rho^0}\right]\dv\Psi(\u^0)\nonumber\\
 &\qquad \qquad               -\frac{1}{  {\rho^0}} \cu \H^0\times \H^0
              +\frac{1}{N^\epsilon+\rho^0}[\F^\epsilon+\u^0\times \G^\epsilon+\U^\epsilon\times \H^0]\times \H^0 \nonumber\\
  &\qquad \qquad +\frac{1}{N^\epsilon+\rho^0}[\F^\epsilon+\u^0\times \G^\epsilon+\U^\epsilon\times\H^0]\times \G^\epsilon\nonumber\\
 &\qquad \qquad + \frac{1}{N^\epsilon+\rho^0}(\U^\epsilon\times \G^\epsilon)\times (\G^\epsilon+\H^0), \label{error2}\\
 & \partial_t\Theta^\epsilon  +[(\U^\epsilon+\u^0)\cdot \nabla]\Theta^\epsilon
 + (\Theta^\epsilon+\theta^0)\, \dv \U^\epsilon-\frac{\kappa}{N^\epsilon+\rho^0}\Delta \Theta^\epsilon\nonumber\\
  & \quad\qquad   =-(\U^\epsilon \cdot \nabla )\theta^0  -\Theta^\epsilon \dv \u^0
    +\left[\frac{\kappa}{N^\epsilon+\rho^0}-\frac{\kappa}{\rho^0}\right]\Delta \theta^0    \nonumber\\
    &\qquad \qquad + \frac{ 2\mu}{N^\epsilon+\rho^0} |\mathbb{D}(\U^\epsilon)|^2
    +\frac{\lambda}{N^\epsilon+\rho^0}|\mbox{tr}\mathbb{D}(\U^\epsilon)|^2\nonumber\\
      &\qquad \qquad
   + \frac{ 4\mu}{N^\epsilon+\rho^0}\mathbb{D}(\U^\epsilon): \mathbb{D}(\u^0)
     + \frac{ 2\lambda}{N^\epsilon+\rho^0}\,[\mbox{tr}\mathbb{D}(\U^\epsilon) \mbox{tr}\mathbb{D}(\u^0)]\nonumber\\
      &\qquad \qquad + \left[\frac{2\mu}{N^\epsilon+\rho^0}-\frac{2\mu}{\rho^0}\right]|\mathbb{D}(\u^0)|^2
      +\left[\frac{\lambda}{N^\epsilon+\rho^0}-\frac{\lambda}{\rho^0}\right](\mbox{tr}\mathbb{D}(\u^0))^2\nonumber\\
    &\qquad \qquad + \frac{1}{N^\epsilon+\rho^0}|\F^\epsilon+\U^\epsilon\times \G^\epsilon|^2
    + \frac{1}{N^\epsilon+\rho^0} |\u^0\times \G^\epsilon+\U^\epsilon\times \H^0|^2\nonumber\\
    &\qquad \qquad + \frac{2}{N^\epsilon+\rho^0}(\F^\epsilon+\U^\epsilon\times \G^\epsilon)\cdot
                   [\cu \H^0+\u^0\times \G^\epsilon+\U^\epsilon\times \H^0]\nonumber\\
      &\qquad \qquad +   \frac{2}{N^\epsilon+\rho^0}\cu \H^0\cdot (\u^0\times \G^\epsilon+\U^\epsilon\times \H^0)\nonumber\\
        &\qquad \qquad  +\left[\frac{1}{N^\epsilon+\rho^0}-\frac{1}{\rho^0}\right]|\cu \H^0|^2,      \label{error22}\\
  & \epsilon \partial_t \F^\epsilon - \cu \G^\epsilon
 = -  [\F^\epsilon+\U^\epsilon\times \H^0+\u^0\times \G^\epsilon]-  \U^\epsilon\times \G^\epsilon\nonumber\\
 &\qquad \qquad\qquad\,\,\,\,\, \quad - {\epsilon} \partial_t \cu \H^0+\epsilon\partial_t(\u^0\times \H^0), \label{error3}\\
 & \partial_t \G^\epsilon+\cu \F^\epsilon =0,\quad  \dv \G^\epsilon=0,  \label{error4}
 \end{align}
 with initial data
 \begin{align}\label{error5}
 &  ( N^\epsilon,\U^\epsilon,\Theta^\epsilon,\F^\epsilon,\G^\epsilon)|_{t=0}:=
 ( N^\epsilon_0,\U^\epsilon_0,\Theta^\epsilon_0,\F^\epsilon_0,\G^\epsilon_0)\nonumber\\
  &  \quad  \quad= \left( \rho^\epsilon_0-\rho^0_0, \u^\epsilon_0-\u^0_0, \theta^\epsilon_0-\theta^0_0,
   \i^\epsilon_0- ( \cu\H^0_0 - \u^0_0\times\H^0_0 ),\H^\epsilon_0-\H^0_0 \right).
 \end{align}

 Denote
 \begin{align*}
 &\W^\epsilon=\left(\begin{array}{c}
                   N^\epsilon \\
                    \U^\epsilon\\
                    \Theta^\epsilon\\
                   \F^\epsilon \\
                    \G^\epsilon
                  \end{array}\right),
                  \ \
       \W^\epsilon_0=\left(\begin{array}{c}
                     N^\epsilon_0 \\
                    \U^\epsilon_0\\
                     \Theta^\epsilon_0\\
                     \F^\epsilon_0\\
                     \G^\epsilon_0\\
                  \end{array}\right), \ \
   \D^\epsilon=\left(\begin{array}{cc}
                   \mathbf{I}_{5} & \mathbf{0} \\
                   \mathbf{0} & \left(\begin{array}{cc}
                    \epsilon   \mathbf{I}_{3} & \mathbf{0}\\
                    \mathbf{0} &  \mathbf{I}_{3}
                    \end{array}
                    \right)
                  \end{array}\right), \\
 &   \A^\epsilon_i=\left(\begin{array}{cc}
                  \left(\begin{array}{ccc}
                   (\U^\epsilon+\u^0)_i & (N^\epsilon+\rho^0) e^\mathrm{T}_i & 0  \\
                    \frac{\Theta^\epsilon+\theta^0}{N^\epsilon+\rho^0}e_i & (\U^\epsilon+\u^0)_i \mathbf{I}_{3} & e_i\\
                    0 & (\Theta^\epsilon+\theta^0)e^\mathrm{T}_i & (\U^\epsilon+\u^0)_i
                    \end{array}\right)   & \mathbf{0} \\
                    \mathbf{0} &  \left(\begin{array}{cc}
                      \mathbf{0}& B_{i} \\
                    B_{i}^\mathrm{T}  & \mathbf{0}
                    \end{array}
                    \right)
                  \end{array}\right), \\
  &         \A^\epsilon_{ij} =  \left(\begin{array}{cccc}
                                    {0} & \mathbf{0}& \mathbf{0}& \mathbf{0}\\
                              \mathbf{0}&   \frac{\mu}{N^\epsilon+\rho^0}( e_ie_j^\mathrm{T}\mathbf{I}_{3}+e_i^\mathrm{T}e_j)
                                +  \frac{\lambda}{N^\epsilon+\rho^0}e^{\mathrm{T}}_je_i &\mathbf{0}& \mathbf{0} \\
                                     \mathbf{0} &   \mathbf{0}&\frac{\kappa}{N^\epsilon+\rho^0} e_ie_j^\mathrm{T}&   \mathbf{0}\\
                     \mathbf{0}  &   \mathbf{0} &   \mathbf{0}&  \mathbf{0}\\
                    \end{array}
                    \right),\\
  &\s^\epsilon(\W^\epsilon)=\left(\begin{array}{c}
                   -N^\epsilon \dv \u^0-\nabla \rho^0\cdot \U^\epsilon\\
                     {\R}^\epsilon_1\\
                    \R^\epsilon_2\\
                    \R^\epsilon_3\\
                    \mathbf{0}
                    \end{array}
                    \right),
 \end{align*}
where $\R^\epsilon_1, \R^\epsilon_2$, and $ \R^\epsilon_3$ denote the right-hand side of
\eqref{error2}, \eqref{error22}, and \eqref{error3}, respectively.
 $(e_1, e_2, e_3)$ is the canonical basis of $\mathbb{R}^3$, $\mathbf{I}_{d}$ ($d = 3,5$)
 is a $d\times d$ unit matrix, $y_i$ denotes the $i$-th component of $y\in \mathbb{ R}^3$, and
\begin{align*}
B_1 =\left(\begin{array}{ccc}
 0 & 0 & 0 \\
0 & 0 &  1\\
0 & -1 &  0
\end{array}
\right), \quad
 B_2  =\left(\begin{array}{ccc}
 0 & 0 & -1 \\
0 & 0 &  0\\
1 & 0 &  0
\end{array}
\right), \quad
B_3 =\left(\begin{array}{ccc}
 0 & 1 & 0 \\
-1 & 0 &  0\\
0 & 0 &  0
\end{array}
\right).
\end{align*}

Using these notations we can rewrite the  problem
\eqref{error1}--\eqref{error5} in the form
\begin{align}\label{error6}
  \left\{\begin{aligned}
&  \D^\epsilon \partial_t \W^\epsilon +\sum^{3}_{i=1}\A^\epsilon_i \W^\epsilon_{x_i}
  +\sum^{3}_{i,j=1}\A^\epsilon_{ij} \W^\epsilon_{x_ix_j}=\s^\epsilon(\W^\epsilon),\\
 & \W^\epsilon|_{t=0}= \W^\epsilon_0.
 \end{aligned} \right.
\end{align}
It is not difficult to see that the system for $\W^\epsilon$  in
\eqref{error6} can be reduced to  a quasilinear symmetric
hyperbolic-parabolic one. In fact, if we introduce
\begin{align*}  \A^\epsilon=\left(\begin{array}{cc}
                  \left(\begin{array}{ccc}
                   \frac{\Theta^\epsilon+\theta^0}{(N^\epsilon+\rho^0)^2} & \mathbf{0} & 0  \\
                   \mathbf{0} &  \mathbf{I}_{3} & \mathbf{0}\\
                   {0} & \mathbf{0} & \frac{1}{\Theta^\epsilon+\theta^0}\\
                                       \end{array}\right)   & \mathbf{0} \\
                    \mathbf{0} &   \mathbf{I}_{6}
                                   \end{array}\right),
\end{align*}
which is positively definite when $\|N^\epsilon\|_{L^\infty_T L^\infty_x}\leq
\hat \rho/2$ and   $\|\Theta^\epsilon\|_{L^\infty_T L^\infty_x}\leq
\hat \theta/2$,
 then $\tilde{\A}_0^\epsilon=\A^\epsilon\D^\epsilon$ and $\tilde{\A}_i^\epsilon = \A^\epsilon\A^\epsilon_i$
 are positive symmetric on $[0,T]$ for all $1\leq i\leq 3.$  Moreover, the assumptions that $\mu>0, 2\mu+3\lambda>0$, and $\kappa>0$ imply that
 $$\mathcal{A}^\epsilon=\sum^{3}_{i,j=1}\A^\epsilon\A^\epsilon_{ij}\W^\epsilon_{x_ix_j}$$ is an elliptic operator.
  Thus, we can apply the result  of Vol'pert and
Hudiaev~\cite{VH}    to obtain   the following local existence for the problem \eqref{error6}.

\begin{prop} \label{Pb}
Let $s>7/2 $ be an integer and $(\rho^0_0, \u^0_0, \theta^0_0,  \H_0^0)$ satisfy the conditions in Proposition \ref{Pa}.
 Assume that the initial data $(N^\epsilon_0, \U^\epsilon_0, \Theta^\epsilon_0, \F_0^\epsilon, \G_0^\epsilon)$ satisfy
 $N^\epsilon_0, \U^\epsilon_0,\Theta^\epsilon_0, \F^\epsilon_0,  \G^\epsilon_0\in
  H^s(\mathbb{T}^3)$,  $\dv \G^\epsilon_0 =0$, and
  \begin{gather*}
 \quad
\|N^\epsilon_0\|_s\leq \delta,\quad\|\Theta^\epsilon_0\|_s\leq \delta
\end{gather*}
  for some  constant $\delta>0$.
Then there exist positive constants $T^\epsilon\,(0<T^\epsilon\leq +\infty)$  and $K$ such that the Cauchy problem
\eqref{error6} has a unique classical solution $(N^\epsilon,
\U^\epsilon, \Theta^\epsilon, \F^\epsilon, \G^\epsilon)$ satisfying $\dv \G^\epsilon =0$ and
\begin{gather*}
 N^\epsilon, \F^\epsilon, \G^\epsilon \in
C^l([0,T^\epsilon),H^{s-l}), \
\U^\epsilon, \Theta^\epsilon \in  C^l([0,T^\epsilon),H^{s-2l}),\  l=0,1; \\
   \|(N^\epsilon,
\U^\epsilon, \Theta^\epsilon, \F^\epsilon, \G^\epsilon)(t)\|_{s}\leq
   K\delta , \ \  t\in [0,T^\epsilon).
  \end{gather*}
 \end{prop}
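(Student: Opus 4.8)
The plan is to recognize the Cauchy problem \eqref{error6} as a quasilinear symmetric hyperbolic--parabolic system for \emph{fixed} $\epsilon>0$ and to invoke the local existence theory of Vol'pert and Hudjaev \cite{VH}; Proposition~\ref{Pa} enters only to supply the background fields $(\rho^0,\u^0,\theta^0,\H^0)$ with their positive lower bounds $\hat\rho,\hat\theta$ and enough regularity in $(x,t)$ for the coefficients and source of the error system. Since $s>3/2$, the embedding $H^s(\mathbb{T}^3)\hookrightarrow L^\infty(\mathbb{T}^3)$ together with $\|N^\epsilon_0\|_s\le\delta$, $\|\Theta^\epsilon_0\|_s\le\delta$ makes $\|N^\epsilon_0\|_{L^\infty}$, $\|\Theta^\epsilon_0\|_{L^\infty}$ small, so that by continuity in time one keeps $\|N^\epsilon(t)\|_{L^\infty}\le\hat\rho/2$, $\|\Theta^\epsilon(t)\|_{L^\infty}\le\hat\theta/2$ on a short interval; there $N^\epsilon+\rho^0\ge\hat\rho/2>0$, $\Theta^\epsilon+\theta^0\ge\hat\theta/2>0$, and the matrix $\A^\epsilon$ of the preceding paragraph is positive definite. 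Left-multiplying \eqref{error6} by $\A^\epsilon$ turns it into $\tilde\A^\epsilon_0\partial_t\W^\epsilon+\sum_i\tilde\A^\epsilon_i\W^\epsilon_{x_i}+\mathcal{A}^\epsilon=\A^\epsilon\s^\epsilon(\W^\epsilon)$ with $\tilde\A^\epsilon_0=\A^\epsilon\D^\epsilon$ symmetric positive definite, $\tilde\A^\epsilon_i=\A^\epsilon\A^\epsilon_i$ symmetric, and the second-order part $\mathcal{A}^\epsilon$ elliptic in the $(\U^\epsilon,\Theta^\epsilon)$-block and identically zero in the $(N^\epsilon,\F^\epsilon,\G^\epsilon)$-block, which is exactly the symmetric hyperbolic--parabolic structure required by \cite{VH}. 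Note that the positivity constant of $\tilde\A^\epsilon_0$ degenerates like $\epsilon$ through the $\F^\epsilon$-block, so the time $T^\epsilon$ produced here may shrink as $\epsilon\to0$; obtaining an $\epsilon$-uniform existence time is precisely the role of the a priori estimates of Section~\ref{S3}.

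Next I would check that the entries of $\D^\epsilon,\A^\epsilon_i,\A^\epsilon_{ij}$ and the components of $\s^\epsilon(\W^\epsilon)$ are smooth ($C^s$ suffices) functions of $\W^\epsilon$ and of $(\rho^0,\u^0,\theta^0,\H^0)$ together with their first spatial and temporal derivatives, on the region $\{N^\epsilon+\rho^0\ge\hat\rho/2,\ \Theta^\epsilon+\theta^0\ge\hat\theta/2\}$: the only nonsmooth-looking ingredients are the quotients $\tfrac{1}{N^\epsilon+\rho^0}$, $\tfrac{\Theta^\epsilon+\theta^0}{N^\epsilon+\rho^0}$ and the like, which are smooth there, while $\R^\epsilon_3$ carries the terms $-\epsilon\,\partial_t\cu\H^0+\epsilon\,\partial_t(\u^0\times\H^0)$, so that $\partial_t\u^0$, $\partial_t\H^0$ occur. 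This is exactly why Proposition~\ref{Pa} is stated with the stronger assumption $(\rho^0_0,\u^0_0,\theta^0_0,\H^0_0)\in H^{s+2}$: it makes $\partial_t\u^0,\partial_t\H^0$ (hence all background terms in $\s^\epsilon$) sufficiently regular, so that $\s^\epsilon$, evaluated along an $H^s$-solution, is an admissible right-hand side in the sense of \cite{VH}, estimated in the relevant norms via the Moser inequalities \eqref{ma}--\eqref{ho}. Using in addition $s>7/2$, so that $H^{s-2}\hookrightarrow L^\infty(\mathbb{T}^3)$ and the parabolic unknowns $\U^\epsilon,\Theta^\epsilon$ have classical second $x$-derivatives and first $t$-derivative, the theorem of \cite{VH} then yields a maximal $T^\epsilon\in(0,+\infty]$ and a unique classical solution with $N^\epsilon,\F^\epsilon,\G^\epsilon\in C^l([0,T^\epsilon),H^{s-l})$ and $\U^\epsilon,\Theta^\epsilon\in C^l([0,T^\epsilon),H^{s-2l})$, $l=0,1$; uniqueness also follows from the standard $L^2$ energy estimate for the difference of two solutions, using the symmetric--parabolic structure.

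Finally, the two remaining assertions. The constraint $\dv\G^\epsilon=0$ is preserved: applying $\dv$ to $\partial_t\G^\epsilon+\cu\F^\epsilon=0$ gives $\partial_t(\dv\G^\epsilon)=-\dv\cu\F^\epsilon=0$, hence $\dv\G^\epsilon(t)=\dv\G^\epsilon_0=0$ on $[0,T^\epsilon)$. For the a priori bound, $t\mapsto\|(N^\epsilon,\U^\epsilon,\Theta^\epsilon,\F^\epsilon,\G^\epsilon)(t)\|_s$ is continuous and $\|(N^\epsilon_0,\U^\epsilon_0,\Theta^\epsilon_0,\F^\epsilon_0,\G^\epsilon_0)\|_s$ is controlled by $\delta$ (in the setting of Theorem~\ref{th}, every component of $\W^\epsilon_0$ is of size $O(\epsilon)\le\delta$ by \eqref{ivda}), so after possibly shrinking $T^\epsilon$ one has $\|(N^\epsilon,\U^\epsilon,\Theta^\epsilon,\F^\epsilon,\G^\epsilon)(t)\|_s\le K\delta$ for a fixed constant $K$, which simultaneously maintains the $L^\infty$-smallness of $N^\epsilon,\Theta^\epsilon$ used in the first paragraph. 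I expect the only real work to be the structural verification of paragraphs one and two — confirming the symmetrizability and the degenerate-parabolic form, and checking that the source (with its $\partial_t\u^0,\partial_t\H^0$ and $\epsilon$-weighted pieces) is an admissible perturbation with the right regularity; once this is in place, existence, uniqueness and the regularity assertions are a direct application of \cite{VH}.
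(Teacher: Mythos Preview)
Your approach is exactly that of the paper: the authors' ``proof'' is the paragraph immediately preceding the proposition, which introduces the symmetrizer $\A^\epsilon$, observes that $\tilde\A^\epsilon_0=\A^\epsilon\D^\epsilon$ and $\tilde\A^\epsilon_i=\A^\epsilon\A^\epsilon_i$ are symmetric (positive definite, respectively symmetric) while the second-order part is elliptic in the $(\U^\epsilon,\Theta^\epsilon)$-block, and then invokes Vol'pert--Hudjaev \cite{VH}. Your write-up in fact supplies more detail than the paper does --- the explanation of why the extra $H^{s+2}$ regularity on the background is needed for $\R^\epsilon_3$, the propagation of $\dv\G^\epsilon=0$, and the remark that the $K\delta$ bound implicitly uses smallness of the full $\W^\epsilon_0$ (as guaranteed by \eqref{ivda}) --- all of which the paper leaves to the reader.
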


 Note that  for smooth solutions, the electromagnetic
fluid system \eqref{fya}--\eqref{fye} with the initial data \eqref{fyf} are equivalent to
\eqref{error1}--\eqref{error5} or \eqref{error6} on $[0,T], T<\min\{T^\epsilon,T_*\}$. Therefore, in order to obtain  the convergence of
 electromagnetic fluid equations \eqref{fya}--\eqref{fye} to the full
 compressible magnetohydrodynamic equations \eqref{fza}--\eqref{fzd},
  we only need to establish  uniform decay estimates with respect
  to the parameter $\epsilon$ of the solution to the error system \eqref{error6}.
This will be achieved by the elaborate energy method presented in next section.

\renewcommand{\theequation}{\thesection.\arabic{equation}}
\setcounter{equation}{0}
\section{Uniform energy estimates and proof of Theorem \ref{th}} \label{S3}
 In this section we derive  uniform decay estimates with respect to the parameter
  $\epsilon$ of the solution to the problem \eqref{error6} and
  justify rigorously the convergence of electromagnetic fluid system to the
 full compressible magnetohydrodynamic equations \eqref{fza}--\eqref{fzd}.
Here we adapt and modify some techniques developed in \cite{PW,JL} and put main efforts on the estimates
of higher order nonlinear terms.

We first establish the convergence rate of the error equations by establishing the \emph{a priori} estimates
uniformly in $\epsilon$. For presentation conciseness, we define
\begin{align*}
 &\|\mathcal{E}^\epsilon(t)\|^2_s\    := \|(N^\epsilon,\U^\epsilon,\Theta^\epsilon, \G^\epsilon)(t)\|^2_{s},\\
 &\v \mathcal{E}^\epsilon(t)\v ^2_s :=\|\mathcal{E}^\epsilon(t)\| ^2_s+ {\epsilon }\Vert
\F^\epsilon (t)\Vert^2_{s},\\
& \v\mathcal{E}^\epsilon\v_{s,T}\ : =\sup_{0<t\leq T}\v\mathcal{E}^\epsilon(t)\v_s.
\end{align*}

The crucial estimate of our paper is the following decay   result on the error system
\eqref{error1}--\eqref{error4}.

\begin{prop}\label{P31}
   Let $s>7/2$ be an integer and assume that the initial data  $( N^\epsilon_0,\U^\epsilon_0,\Theta^\epsilon_0, \F^\epsilon_0,\G^\epsilon_0)$ satisfy
\begin{align}\label{ww}
\|(N^\epsilon_0,\U^\epsilon_0,\Theta^\epsilon_0, \G^\epsilon_0) \|^2_{s}+ \epsilon \Vert
\F^\epsilon_0 \Vert^2_{s} =\v \mathcal{E}^\epsilon(t=0)\v^2_{s} \leq M_0{\epsilon}^2
 \end{align}
for sufficiently small $\epsilon$ and   some constant $M_0>0$   independent of $\epsilon$.
Then, for any $T_0\in (0, T_*)$, there exist
  two constants $ M_1 > 0$  and $\epsilon_1 > 0$  depending only on $T_0$, such that
for all $\epsilon\in (0,\epsilon_1]$, it holds that $T^\epsilon\geq T_0$
and the solution $( N^\epsilon, \U^\epsilon,\Theta^\epsilon, \F^\epsilon, \G^\epsilon)$ of the problem
\eqref{error1}--\eqref{error5}, well-defined in $[0, T_0]$, enjoys that
 \begin{align}\label{www}
   \v \mathcal{E}^\epsilon\v _{s,T_0} \leq M_1 {\epsilon}.
 \end{align}
   \end{prop}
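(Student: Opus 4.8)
The plan is to establish \eqref{www} by a continuity (bootstrap) argument built on an $H^s$ energy estimate for the error system \eqref{error1}--\eqref{error4}. First I would fix $T_0\in(0,T_*)$ and let $C_0$ denote a bound for $\|(\rho^0,\u^0,\theta^0,\H^0)\|_{C([0,T_0];H^{s+2})}$ together with the lower bounds $\hat\rho,\hat\theta$ from Proposition~\ref{Pa}. Introduce the \emph{a priori} hypothesis $\v\mathcal{E}^\epsilon\v_{s,T}\le \epsilon^{1/2}$ (or $\le$ a fixed small constant), which guarantees $\|N^\epsilon\|_{L^\infty}\le\hat\rho/2$, $\|\Theta^\epsilon\|_{L^\infty}\le\hat\theta/2$, so all the reciprocals $1/(N^\epsilon+\rho^0)$ appearing in \eqref{error2}--\eqref{error22} are smooth and bounded, and the symmetrizer $\A^\epsilon$ of Section~\ref{S2} is uniformly positive definite. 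Under this hypothesis the local solution of Proposition~\ref{Pb} extends as long as the bound persists.

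The core is the differentiated energy estimate. For each multi-index $\alpha$ with $|\alpha|\le s$, apply $\pa$ to \eqref{error6}, multiply by $2\A^\epsilon\pa\W^\epsilon$, and integrate over $\mathbb{T}^3$. The symmetric-hyperbolic part yields $\frac{d}{dt}\langle \tilde\A_0^\epsilon\pa\W^\epsilon,\pa\W^\epsilon\rangle$ up to a commutator $\langle[\pa,\A^\epsilon\A^\epsilon_i]\W^\epsilon_{x_i},\pa\W^\epsilon\rangle$ controlled by Moser's inequality \eqref{mb} and a $\langle(\partial_t\tilde\A_0^\epsilon+\dv\tilde\A^\epsilon)\cdot,\cdot\rangle$ term. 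Note $\tilde\A_0^\epsilon$ carries the factor $\epsilon$ in the $\F^\epsilon$-block, so this is exactly where $\v\cdot\v_s$ with its $\epsilon\|\F^\epsilon\|_s^2$ weight is produced. The parabolic part $\sum\A^\epsilon\A^\epsilon_{ij}\W^\epsilon_{x_ix_j}$, after integration by parts, gives the good dissipation terms $-c(\|\nabla\U^\epsilon\|_{|\alpha|}^2+\|\nabla\Theta^\epsilon\|_{|\alpha|}^2)$ plus commutators. The key algebraic point — the source term $\s^\epsilon(\W^\epsilon)$ — must be split into: (i) genuinely small terms $\epsilon\partial_t(\cu\H^0-\u^0\times\H^0)$, of size $O(\epsilon)$ uniformly by Proposition~\ref{Pa}; (ii) terms linear in $\W^\epsilon$ with coefficients depending smoothly on the limit solution, bounded by $C_0\v\mathcal{E}^\epsilon\v_s^2$ via \eqref{ma} and \eqref{mo}; (iii) the quadratic/cubic terms such as $|\F^\epsilon+\U^\epsilon\times\G^\epsilon|^2$, $(\U^\epsilon\times\G^\epsilon)\times(\G^\epsilon+\H^0)$, handled by Moser \eqref{ma} and the \emph{a priori} smallness to absorb them into $C\v\mathcal{E}^\epsilon\v_s\cdot\v\mathcal{E}^\epsilon\v_s^2$. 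Summing over $|\alpha|\le s$ and using the equivalence of $\langle\tilde\A_0^\epsilon\pa\W^\epsilon,\pa\W^\epsilon\rangle$ with $\v\mathcal{E}^\epsilon(t)\v_s^2$ (the $\epsilon$ weight matching) yields, after absorbing dissipation where needed,
\begin{align*}
\frac{d}{dt}\v\mathcal{E}^\epsilon(t)\v_s^2 + c\big(\|\nabla\U^\epsilon\|_s^2+\|\nabla\Theta^\epsilon\|_s^2\big)
\le C_1\v\mathcal{E}^\epsilon(t)\v_s^2 + C_1\v\mathcal{E}^\epsilon(t)\v_s^3 + C_1\epsilon^2,
\end{align*}
with $C_1=C_1(C_0,T_0)$ independent of $\epsilon$.

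The hardest point is controlling the terms coming from $\frac{1}{N^\epsilon+\rho^0}\dv\Psi(\U^\epsilon)$ and $\frac{\Theta^\epsilon+\theta^0}{N^\epsilon+\rho^0}\nabla N^\epsilon$ in \eqref{error2}, and the quadratic gradient terms $|\mathbb{D}(\U^\epsilon)|^2$, $\mathbb{D}(\U^\epsilon):\mathbb{D}(\u^0)$ in \eqref{error22}: at top order $|\alpha|=s$ these produce $\|D^{s+1}\U^\epsilon\|$-type or $\|D^s N^\epsilon\|\|D^s\Theta^\epsilon\|$-type quantities that are not obviously dominated by $\v\mathcal{E}^\epsilon\v_s^2$ alone. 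The resolution — this is the remark at the end of the introduction about the density being controlled through the viscosity — is to integrate the $\dv\Psi(\U^\epsilon)$ term by parts to feed the parabolic dissipation $\|\nabla\U^\epsilon\|_s^2$, and to estimate the highest-order piece of $\frac{\Theta^\epsilon+\theta^0}{N^\epsilon+\rho^0}\nabla N^\epsilon$ against $\nabla N^\epsilon$ paired with $\nabla\U^\epsilon$ (using the $N^\epsilon$ equation \eqref{error1} to trade $\dv\U^\epsilon$ for $\partial_t N^\epsilon$), so that after a Cauchy--Schwarz with small parameter the dangerous terms are absorbed into the good dissipation $c\|\nabla\U^\epsilon\|_s^2$; the quadratic differential terms in \eqref{error22} are bounded in $L^2$ by $\|\nabla\U^\epsilon\|_{L^\infty}\|D^s\nabla\U^\epsilon\|\lesssim \v\mathcal{E}^\epsilon\v_s\|\nabla\U^\epsilon\|_s$ plus lower-order, again absorbed. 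Finally, with the differential inequality in hand, Gronwall gives $\v\mathcal{E}^\epsilon(t)\v_s^2\le e^{C_1 T_0}(\v\mathcal{E}^\epsilon(0)\v_s^2+C_1\epsilon^2 T_0)\le e^{C_1 T_0}(M_0+C_1T_0)\epsilon^2=:M_1^2\epsilon^2$ as long as the bootstrap hypothesis holds. Choosing $\epsilon_1$ small enough that $M_1\epsilon_1\le \frac12\epsilon_1^{1/2}$ (strictly better than the hypothesis $\epsilon^{1/2}$), a standard continuity argument closes the bootstrap on all of $[0,T_0]$, forces $T^\epsilon\ge T_0$, and yields \eqref{www}. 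Theorem~\ref{th} then follows by translating back through the equivalence $N^\epsilon=\rho^\epsilon-\rho^0$, etc., noted after Proposition~\ref{Pb}.
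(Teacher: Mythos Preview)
Your overall architecture (a priori energy inequality, Gronwall, continuation) matches the paper, but there is a genuine gap in the energy estimate: you have not accounted for the damping dissipation of $\F^\epsilon$. The right-hand side $\R^\epsilon_3$ of \eqref{error3} contains the term $-\F^\epsilon$; when $\pa\eqref{error3}$ is tested against $\pa\F^\epsilon$ this produces a good term $+\|\pa\F^\epsilon\|^2$ on the left, with \emph{no} $\epsilon$ weight. In the paper this is exactly the $\tfrac14\|\F^\epsilon\|_s^2$ dissipation in \eqref{H2}, and it is indispensable. The reason is that $\F^\epsilon$ enters $\v\mathcal{E}^\epsilon\v_s^2$ only through $\epsilon\|\F^\epsilon\|_s^2$, yet it appears \emph{unweighted} in the source terms of \eqref{error2} and \eqref{error22}: e.g.\ $\tfrac{1}{N^\epsilon+\rho^0}\F^\epsilon\times\H^0$ in $\R^\epsilon_1$ and $\tfrac{2}{N^\epsilon+\rho^0}\F^\epsilon\cdot\cu\H^0$, $\tfrac{1}{N^\epsilon+\rho^0}|\F^\epsilon|^2$ in $\R^\epsilon_2$. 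After differentiating and pairing with $\pa\U^\epsilon$ or $\pa\Theta^\epsilon$, these contribute quantities like $\|\F^\epsilon\|_s\|\U^\epsilon\|_s$ and $\|\F^\epsilon\|_s^2\|\Theta^\epsilon\|_s$, which are \emph{not} bounded by $C\v\mathcal{E}^\epsilon\v_s^2$ or $C\v\mathcal{E}^\epsilon\v_s^3$ uniformly in $\epsilon$ (indeed $\|\F^\epsilon\|_s\|\U^\epsilon\|_s\le \epsilon^{-1/2}\v\mathcal{E}^\epsilon\v_s^2$). Your classification of these as ``(ii) terms linear in $\W^\epsilon$ \dots\ bounded by $C_0\v\mathcal{E}^\epsilon\v_s^2$'' therefore fails. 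The paper's remedy is to Young-split each such contribution as $\eta\|\F^\epsilon\|_s^2+C_\eta\|\mathcal{E}^\epsilon\|_s^2$ (see \eqref{hu12}--\eqref{hu17}, \eqref{ht111}--\eqref{ht15}) and then absorb the first piece into the $\F^\epsilon$ damping from \eqref{L2M2b}. Your differential inequality must carry an additional good term $+c\|\F^\epsilon\|_s^2$ on the left (without $\epsilon$), and the right-hand side should involve only $\|\mathcal{E}^\epsilon\|_s$, as in \eqref{H2}; otherwise the bootstrap cannot close uniformly in $\epsilon$.

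A secondary remark: your proposed handling of $\tfrac{\Theta^\epsilon+\theta^0}{N^\epsilon+\rho^0}\nabla N^\epsilon$ by ``using \eqref{error1} to trade $\dv\U^\epsilon$ for $\partial_t N^\epsilon$'' is more involved than what is needed. The paper simply integrates by parts once, moving the gradient from $N^\epsilon$ onto $\U^\epsilon$ (see \eqref{hu4}), and absorbs the resulting $\eta\|\nabla\pa\U^\epsilon\|^2$ into the viscous dissipation $\int\tfrac{\mu}{N^\epsilon+\rho^0}|\nabla\pa\U^\epsilon|^2\,\dif x$; no substitution from the continuity equation is required. Likewise, for the quadratic gradient terms $|\mathbb{D}(\U^\epsilon)|^2$ in \eqref{error22} the paper shifts one derivative onto $\Theta^\epsilon$ (see \eqref{ht9}) and absorbs into the heat dissipation, rather than relying on $\|\nabla\U^\epsilon\|_{L^\infty}\|D^s\nabla\U^\epsilon\|$ as you suggest.
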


Once this proposition is established, the proof of Theorem \ref{th} is a direct procedure. In fact, we have

\begin{proof}[Proof of Theorem \ref{th}]
Suppose that Proposition
\ref{P31} holds. According to the definition of the error functions $(N^\epsilon,\U^\epsilon,\Theta^\epsilon,\F^\epsilon,\G^\epsilon)$
and the regularity of $(\rho^0,\u^0,\theta^0,\H^0)$, the error system \eqref{error1}--\eqref{error4}  and the
primitive system \eqref{fya}--\eqref{fye} are equivalent on $[0,T]$ for some $T>0$.
Therefore  the assumption \eqref{ivda} in Theorem \ref{th} imply the assumption  \eqref{ww} in Proposition
\ref{P31}, and hence  \eqref{www} implies \eqref{iivda}.
\end{proof}

Therefore, our main goal next is to prove Proposition \ref{P31} which can be approached by the following a priori
estimates. For some given  $\hat T<1$ and any $\tilde T<\hat T$ independent of $\epsilon$, we denote
$T \equiv  T_\epsilon = \min\{\tilde T, T^\epsilon \}$.

\begin{lem} Let the assumptions in Proposition \ref{P31} hold. Then, for all $0<t<T $ and sufficiently small $\epsilon$,
there exist two positive constants $\delta_1$ and $\delta_2$, such that
   \begin{align}\label{H2}
     &\v \mathcal{E}^\epsilon(t)\v ^2_s    +   \int^t_0\left\{\delta_1\|  \nabla \U^\epsilon\| ^2_{s}  +\delta_2\| \nabla \Theta^\epsilon\| ^2_{s}
+\frac{1}{4}\| \F^\epsilon\| ^2_{s}\right\}(\tau)\dif\tau\nonumber\\
     \leq  &  \v \mathcal{E}^\epsilon(t=0)\v^2 _{s} + C\int^t_0\left
     \{(\| \mathcal{E}^\epsilon\| ^{2s}_{s}+\| \mathcal{E}^\epsilon\| ^2_{s}+1)\| \mathcal{E}^\epsilon\| ^2_{s}\right\}(\tau)\dif \tau
     +C\epsilon^2.
          \end{align}
\end{lem}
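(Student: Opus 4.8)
The plan is to perform a standard high-order energy estimate on the symmetric hyperbolic-parabolic error system \eqref{error6}, applying $\pa$ for $|\alpha|\le s$, pairing against $2\A^\epsilon\pa\W^\epsilon$ in $L^2$, and then summing. Because $\tilde{\A}_0^\epsilon=\A^\epsilon\D^\epsilon$ is symmetric positive definite (with the lower-right $3\times3$ block carrying the factor $\epsilon$ in front of $\F^\epsilon$, which is precisely why the triple-bar norm $\v\mathcal{E}^\epsilon\v_s^2=\|\mathcal{E}^\epsilon\|_s^2+\epsilon\|\F^\epsilon\|_s^2$ is the natural quantity), the time derivative of $\langle\A^\epsilon\D^\epsilon\pa\W^\epsilon,\pa\W^\epsilon\rangle$ reproduces $\frac{\dif}{\dif t}\v\mathcal{E}^\epsilon\v_s^2$ up to lower-order commutator terms. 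The convection matrices $\tilde{\A}_i^\epsilon=\A^\epsilon\A^\epsilon_i$ are symmetric, so integration by parts on $\sum_i\langle\tilde{\A}_i^\epsilon\pa\W^\epsilon_{x_i},\pa\W^\epsilon\rangle$ costs only $\|\partial_x\tilde{\A}_i^\epsilon\|_{L^\infty}$, controlled by the regularity of $(\rho^0,\u^0,\theta^0,\H^0)$ from Proposition~\ref{Pa} together with the a priori smallness of $(N^\epsilon,\U^\epsilon,\Theta^\epsilon)$. First I would dispatch these "easy" transport and commutator contributions, which after the Moser inequalities \eqref{ma}--\eqref{mb} and \eqref{mo} yield terms bounded by $C(\|\mathcal{E}^\epsilon\|_s^{2s}+\|\mathcal{E}^\epsilon\|_s^2+1)\|\mathcal{E}^\epsilon\|_s^2$.

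Next I would handle the second-order (parabolic) terms $\sum_{i,j}\langle\A^\epsilon\A^\epsilon_{ij}\pa\W^\epsilon_{x_ix_j},\pa\W^\epsilon\rangle$. Integrating by parts once moves one derivative onto $\pa\W^\epsilon$; the leading quadratic form, by $\mu>0$, $2\mu+3\lambda>0$, $\kappa>0$, produces the good dissipation $-\delta_1\|\nabla\U^\epsilon\|_s^2-\delta_2\|\nabla\Theta^\epsilon\|_s^2$ (using Korn-type algebra on $\mathbb{D}(\U^\epsilon)$ and $\mathrm{tr}\,\mathbb{D}(\U^\epsilon)$, exactly as in \eqref{psiu}), with the error from differentiating the coefficients $1/(N^\epsilon+\rho^0)$ absorbable via Young's inequality into a small fraction of the dissipation plus $C\|\mathcal{E}^\epsilon\|_s^2$. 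Crucially, as the authors emphasise in the last paragraph before the lemma, it is this viscous dissipation — not a pressure term — that will later let us close the estimate for $N^\epsilon$; here it simply has to be produced cleanly.

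The main obstacle is the source term $\s^\epsilon(\W^\epsilon)$, i.e. $\langle\pa\s^\epsilon,\A^\epsilon\pa\W^\epsilon\rangle$, and within it the genuinely singular and genuinely nonlinear pieces of $\R^\epsilon_1,\R^\epsilon_2,\R^\epsilon_3$. The term $-\frac{1}{\rho^0}\cu\H^0\times\H^0 + \frac{1}{N^\epsilon+\rho^0}[\F^\epsilon+\u^0\times\G^\epsilon+\U^\epsilon\times\H^0]\times\H^0$ must be reorganised so that the $\cu\H^0$ contributions cancel against $\i^0=\cu\H^0-\u^0\times\H^0$, leaving $\F^\epsilon$ and $\G^\epsilon$ multiplied by smooth coefficients; this gives a $\|\F^\epsilon\|_s$ term that, paired with $\U^\epsilon$, is dangerous because $\F^\epsilon$ only enjoys the weak bound $\sqrt\epsilon\|\F^\epsilon\|_s$ — it must be absorbed into the $-\frac14\|\F^\epsilon\|_s^2$ on the left (which comes from the damping term $-\F^\epsilon$ in \eqref{error3} paired against $\epsilon\pa\F^\epsilon$ in the triple-bar norm) and into the $\U^\epsilon$-dissipation, with leftover $C\|\mathcal{E}^\epsilon\|_s^2$. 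The quadratic terms $\frac{1}{N^\epsilon+\rho^0}|\F^\epsilon+\U^\epsilon\times\G^\epsilon|^2$ and the analogous ones in $\R^\epsilon_2$ — the "higher order nonlinear terms" the introduction flags — require the split $|\F^\epsilon+\U^\epsilon\times\G^\epsilon|^2=|\F^\epsilon|^2+2\F^\epsilon\cdot(\U^\epsilon\times\G^\epsilon)+|\U^\epsilon\times\G^\epsilon|^2$, where $\|\,|\F^\epsilon|^2\|$ type contributions are estimated by $\|\F^\epsilon\|_{L^\infty}\|D_x^s\F^\epsilon\|$ and bounded using $\v\mathcal{E}^\epsilon\v_{s,T}\le K\delta\le1$ so that one factor of $\|\F^\epsilon\|_s$ is traded for a constant, yielding at worst $C(\|\mathcal{E}^\epsilon\|_s^{2s}+\|\mathcal{E}^\epsilon\|_s^2)\|\mathcal{E}^\epsilon\|_s^2$ after also using the $\epsilon$-weighted norm carefully; this is where the "very refined analysis" is spent. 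Finally, the explicitly $O(\epsilon)$ forcing $-\epsilon\partial_t\cu\H^0+\epsilon\partial_t(\u^0\times\H^0)$ in \eqref{error3}, paired against $\A^\epsilon\pa\F^\epsilon$ and using $2ab\le \tfrac18 a^2 + 8b^2$, contributes $\tfrac18\|\F^\epsilon\|_s^2$ (absorbed) plus $C\epsilon^2\|(\cu\H^0,\u^0\times\H^0)\|_{C^1([0,T_0];H^s)}^2\le C\epsilon^2$, which is the sole origin of the additive $C\epsilon^2$ on the right-hand side of \eqref{H2}. Collecting all contributions, choosing the Young-inequality parameters so that the parabolic dissipation and the $\F^\epsilon$-damping are only fractionally consumed, and integrating in time from $0$ to $t$ gives exactly \eqref{H2}.
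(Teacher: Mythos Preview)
Your overall plan via the symmetrizer $\A^\epsilon$ is sound, but it is \emph{not} the route the paper takes. The paper never pairs against $\A^\epsilon\pa\W^\epsilon$; it multiplies \eqref{error1}--\eqref{error4} by the plain test functions $\pa N^\epsilon,\pa\U^\epsilon,\pa\Theta^\epsilon,\pa\F^\epsilon,\pa\G^\epsilon$ separately. Consequently the hyperbolic cross terms (e.g.\ $\langle(N^\epsilon+\rho^0)\pa\dv\U^\epsilon,\pa N^\epsilon\rangle$ from \eqref{error1} and $\langle\frac{\Theta^\epsilon+\theta^0}{N^\epsilon+\rho^0}\pa\nabla N^\epsilon,\pa\U^\epsilon\rangle$ from \eqref{error2}) do not cancel by symmetry; the paper instead bounds each by $\eta\|\nabla\pa\U^\epsilon\|^2+C_\eta\|\pa N^\epsilon\|^2$ via Young and absorbs the $\eta$-pieces into the viscous dissipation --- this is precisely the mechanism behind the remark that ``the density is controlled through the viscosity terms in the momentum equations''. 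Your symmetrizer approach avoids that absorption trick; the price is an extra $\langle(\partial_t\A^\epsilon)\D^\epsilon\pa\W^\epsilon,\pa\W^\epsilon\rangle$ term, harmless because $\A^\epsilon$ is the identity on the $(\F^\epsilon,\G^\epsilon)$ block.

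There is, however, a genuine gap in your handling of the $|\F^\epsilon|^2$ contribution to $\R_2^\epsilon$. You say $\|\F^\epsilon\|_{L^\infty}\|D_x^s\F^\epsilon\|$ is controlled because ``$\v\mathcal{E}^\epsilon\v_{s,T}\le K\delta$ so that one factor of $\|\F^\epsilon\|_s$ is traded for a constant''. But $\v\mathcal{E}^\epsilon\v_s$ only controls $\sqrt\epsilon\,\|\F^\epsilon\|_s$, so that trade yields $K\delta/\sqrt\epsilon$, not a constant; and your claimed output $C(\|\mathcal{E}^\epsilon\|_s^{2s}+\|\mathcal{E}^\epsilon\|_s^2)\|\mathcal{E}^\epsilon\|_s^2$ cannot be right since $\|\mathcal{E}^\epsilon\|_s$ does not involve $\F^\epsilon$ at all. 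The paper's resolution is different: in \eqref{ht111}--\eqref{HT202} it keeps a term $\gamma_1\|\F^\epsilon\|_s^4$ on the right with $\gamma_1$ free, then writes $\gamma_1\|\F^\epsilon\|_s^4=(\gamma_1\|\F^\epsilon\|_s^2)\cdot\|\F^\epsilon\|_s^2$ and absorbs it into the damping $\tfrac14\|\F^\epsilon\|_s^2$ on the left of \eqref{H2}, invoking the local-in-time bound on $\|\F^\epsilon\|_s$ from Proposition~\ref{Pb} (this is what the clause ``using the fact that $\F^\epsilon\in C^l([0,T],H^{s-2l})$'' at the end of the proof is doing). You need either this absorption-into-damping step or an explicit bootstrap hypothesis on $\|\F^\epsilon\|_s$; the $\epsilon$-weighted norm alone will not close it.
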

\begin{proof}
   Let $0\leq|\alpha|\leq s$.  In the following arguments the commutators will disappear in the case of $|\alpha|=0$.

    Applying the operator  $\partial^\alpha_x$ to \eqref{error1},
   multiplying  the resulting equation
    by $ \partial^\alpha_x N^\epsilon,$ and integrating over $\mathbb{T}^3$, we obtain that
    \begin{align}\label{hn1}
  \frac12\frac{\rm d}{{\rm d}t}\left\langle  \pa N^\epsilon, \pa N^\epsilon \right\rangle
 = & -\left\langle \pa([(\U^\epsilon+\u^0)\cdot\nabla] N^\epsilon),\pa N^\epsilon\right\rangle\nonumber\\
   &-\left\langle \pa((N^\epsilon+\rho^0)\dv \U^\epsilon), \pa N^\epsilon\right\rangle\nonumber\\
  & +\left\langle \pa((-N^\epsilon \dv \u^0-\nabla \rho^0\cdot \U^\epsilon),\pa N^\epsilon\right\rangle.
    \end{align}

Next we bound every term on the right-hand side of \eqref{hn1}. By the regularity of $\u^0$,
Cauchy-Schwarz's inequality, and Sobolev's imbedding, we have
\begin{align}\label{hn2}
 & \ \ \ \ \langle \partial^\alpha_x([(\U^\epsilon+\u^0)\cdot \nabla]N^\epsilon),\partial^\alpha_x N^\epsilon\rangle\nonumber\\
& = \langle [(\U^\epsilon+\u^0)\cdot \nabla]\partial^\alpha_x N^\epsilon , \partial^\alpha_x N^\epsilon\rangle\
  +\big\langle \mathcal{H}^{(1)},\partial^\alpha_x N^\epsilon \big\rangle\nonumber\\
  & = -\frac12 \langle \dv(\U^\epsilon+\u^0) \partial^\alpha_x N^\epsilon , \partial^\alpha_x N^\epsilon \rangle\
   +\big\langle \mathcal{H}^{(1)},\partial^\alpha_x N^\epsilon \big \rangle\nonumber\\
 & \leq C(\| \mathcal{E}^\epsilon(t)\| _{s}+1)\| \partial^\alpha_x N^\epsilon\| ^2   +\|  \mathcal{H}^{(1)}\| ^2,
\end{align}
where the commutator
\begin{align*}
 \mathcal{H}^{(1)}  =\partial^\alpha_x([(\U^\epsilon+\u^0)\cdot \nabla]N^\epsilon)-[(\U^\epsilon+\u^0)\cdot \nabla]\partial^\alpha_x N^\epsilon
\end{align*}
can be bounded as follows:
\begin{align}\label{ca}
  \big\|\mathcal{H}^{(1)} \big\| &\leq C( \| D_x^1(\U^\epsilon+\u^0)\| _{L^\infty}\| D_x^sN^\epsilon\|
+\| D_x^1N^\epsilon\| _{L^\infty}\| D^{s-1}_x(\U^\epsilon+\u^0)\|) \nonumber\\
   & \leq C(\| \mathcal{E}^\epsilon(t)\| _{s}^2+\| \mathcal{E}^\epsilon(t)\| _{s}).
\end{align}
Here we have used  the Moser-type and Cauchy-Schwarz's inequalities, the regularity of $\u^0$ and Sobolev's imbedding.

Similarly, the second term on the right-hand side of \eqref{hn1} can bounded as follows.
\begin{align}\label{hn3}
  & \left\langle \pa((N^\epsilon+\rho^0)\dv \U^\epsilon), \pa N^\epsilon\right\rangle\nonumber\\
 = \ &\langle (N^\epsilon+\rho^0) \partial^\alpha_x\dv \U^\epsilon , \partial^\alpha_x N^\epsilon\rangle
  +\big\langle \mathcal{H}^{(2)},\partial^\alpha_x N^\epsilon \big\rangle\nonumber\\
  \leq \  & \eta_1 \|\nabla \partial^\alpha_x \U^\epsilon\|^2+ C_{\eta_1} \| \partial^\alpha_x N^\epsilon\| ^2   +\big\|  \mathcal{H}^{(2)}\big\| ^2
\end{align}
for any $\eta_1>0$, where the commutator
\begin{align*}
 \mathcal{H}^{(2)}  =\pa((N^\epsilon+\rho^0)\dv \U^\epsilon) - (N^\epsilon+\rho^0) \partial^\alpha_x\dv \U^\epsilon
\end{align*}
can be estimated by
\begin{align}\label{cb}
\big\|\mathcal{H}^{(2)} \big\| &\leq C( \| D_x^1(N^\epsilon+\rho^0)\| _{L^\infty}\| D_x^s\U^\epsilon\|
+\| D_x^1\U^\epsilon\| _{L^\infty}\| D^{s-1}_x(N^\epsilon+\rho^0)\| )\nonumber\\
   & \leq C(\| \mathcal{E}^\epsilon(t)\| _{s}^2+\| \mathcal{E}^\epsilon(t)\| _{s}).
\end{align}

By the Moser-type and Cauchy-Schwarz's inequalities, and the regularity of $\u^0$ and $\rho^0$, we
can control the third term on the right-hand side of \eqref{hn1} by
\begin{align}\label{hn4}
\left|\left\langle \pa(-N^\epsilon \dv \u^0-\nabla \rho^0\cdot \U^\epsilon),\pa N^\epsilon\right\rangle\right|
\leq C( \|\pa  N^\epsilon\|^2+\|\pa \U^\epsilon\|^2).
\end{align}
Substituting \eqref{hn2}--\eqref{hn4} into \eqref{hn1}, we conclude that
 \begin{align}\label{HN1}
  \frac12\frac{\rm d}{{\rm d}t}\left\langle  \pa N^\epsilon, \pa N^\epsilon \right\rangle
 \leq \ &  \eta_1 \|\nabla \partial^\alpha_x \U^\epsilon\|^2+ C_{\eta_1} \| \partial^\alpha_x N^\epsilon\| ^2 \nonumber\\
 & + C\big[(\| \mathcal{E}^\epsilon(t)\| _{s}+1)\| \partial^\alpha_x N^\epsilon\| ^2+  \| \mathcal{E}^\epsilon(t)\| _{s}^4+ \epsilon^2\big].
  \end{align}

   Applying the operator $\partial^\alpha_x$ to \eqref{error2}, multiplying  the resulting equation
    by $ \partial^\alpha_x\U^\epsilon$, and integrating over $\mathbb{T}^3$, we obtain that
    \begin{align}\label{HU2}
  & \frac12 \frac{\dif}{\dif  t}\langle \partial^\alpha_x\U^\epsilon, \partial^\alpha_x\U^\epsilon \rangle
  + \langle \partial^\alpha_x([(\U^\epsilon+\u^0)\cdot \nabla]\U^\epsilon),\partial^\alpha_x\U^\epsilon\rangle\nonumber\\
   &   +\left\langle \partial^\alpha_x\nabla \Theta^\epsilon, \partial^\alpha_x\U^\epsilon\right\rangle
  +  \left\langle \partial^\alpha_x\left( \frac{\Theta^\epsilon+\theta^0}{N^\epsilon+\rho^0}\nabla N^\epsilon\right), \partial^\alpha_x\U^\epsilon\right\rangle\nonumber\\
 & -  \left\langle   \partial^\alpha_x\left( \frac{1}{N^\epsilon+\rho^0} \dv\Psi(\U^\epsilon)\right) , \partial^\alpha_x \U^\epsilon\right\rangle\nonumber\\
   =     &    -  \left \langle\partial^\alpha_x\left[(\U^\epsilon \cdot \nabla )\u^0\right],   \partial^\alpha_x \U^\epsilon\right\rangle
    -  \left\langle
\partial^\alpha_x\left\{\frac{1}{ \rho^0} \cu
\H^0\times \H^0\right\},\partial^\alpha_x\U^\epsilon \right\rangle\nonumber\\
 &    + \left \langle\partial^\alpha_x\left\{\left[\frac{\Theta^\epsilon+\theta^0}{N^\epsilon+\rho^0}-\frac{\theta^0}{\rho^0}\right] \nabla \rho^0\right\},
   \partial^\alpha_x \U^\epsilon\right\rangle\nonumber\\
   &  +   \left \langle\partial^\alpha_x\left\{\left[\frac{1}{N^\epsilon+\rho^0}-\frac{1}{\rho^0}\right]\dv\Psi(\u^0)\right\},
   \partial^\alpha_x \U^\epsilon\right\rangle\nonumber\\
  & + \left\langle\partial^\alpha_x\left\{ \frac{1}{N^\epsilon+\rho^0}[\F^\epsilon+\u^0\times \G^\epsilon+\U^\epsilon\times \H^0]\times \H^0\right\},
   \partial^\alpha_x\U^\epsilon \right\rangle\nonumber\\
   & + \left\langle \partial^\alpha_x\left\{ \frac{1}{N^\epsilon+\rho^0} [\F^\epsilon+\u^0\times \G^\epsilon+\U^\epsilon\times\H^0]\times \G^\epsilon\right\}
   , \partial^\alpha_x\U^\epsilon \right\rangle\nonumber\\
   & + \left\langle \partial^\alpha_x\left\{\frac{1}{N^\epsilon+\rho^0}
   (\U^\epsilon\times \G^\epsilon)\times (\G^\epsilon+\H^0)\right\} , \partial^\alpha_x\U^\epsilon \right\rangle \nonumber\\
 := &  \sum^{7}_{i=1}\mathcal{R}^{(i)}.
  \end{align}

We first bound the terms on the left-hand side of \eqref{HU2}. Similar to \eqref{hn2} we infer that
\begin{align}\label{hu1}
  & \ \ \ \ \langle \partial^\alpha_x([(\U^\epsilon+\u^0)\cdot \nabla]\U^\epsilon),\partial^\alpha_x\U^\epsilon\rangle \nonumber\\
  & = \langle [(\U^\epsilon+\u^0)\cdot \nabla]\partial^\alpha_x \U^\epsilon , \partial^\alpha_x \U^\epsilon\rangle\
  +\big\langle \mathcal{H}^{(3)},\partial^\alpha_x \U^\epsilon \big\rangle\nonumber\\
  & = -\frac12 \langle \dv(\U^\epsilon+\u^0) \partial^\alpha_x \U^\epsilon , \partial^\alpha_x \U^\epsilon \rangle\
   +\big\langle \mathcal{H}^{(3)},\partial^\alpha_x \U^\epsilon \big \rangle\nonumber\\
 & \leq C(\| \mathcal{E}^\epsilon(t)\| _{s}+1)\| \partial^\alpha_x \U^\epsilon\| ^2   +\big\|  \mathcal{H}^{(3)}\big\| ^2,
\end{align}
where the commutator
\begin{align*}
 \mathcal{H}^{(3)}  =\partial^\alpha_x([(\U^\epsilon+\u^0)\cdot \nabla]\U^\epsilon)-[(\U^\epsilon+\u^0)\cdot \nabla]\partial^\alpha_x \U^\epsilon
\end{align*}
can be bounded by
\begin{align}\label{hu2}
\big\|\mathcal{H}^{(3)} \big\| &\leq C( \| D_x^1(\U^\epsilon+\u^0)\| _{L^\infty}\| D_x^s\U^\epsilon\|
+\| D_x^1\U^\epsilon\| _{L^\infty}\| D^{s-1}_x(\U^\epsilon+\u^0)\|) \nonumber\\
   & \leq C(\| \mathcal{E}^\epsilon(t)\| _{s}^2+\| \mathcal{E}^\epsilon(t)\| _{s}).
\end{align}

By Holder's inequality, we have
\begin{align}\label{hu3}
 \left\langle \partial^\alpha_x\nabla \Theta^\epsilon, \partial^\alpha_x\U^\epsilon\right\rangle
\leq \eta_2    \|\partial^\alpha_x\nabla \Theta^\epsilon\|^2+C_{\eta_2}\|\partial^\alpha_x\U^\epsilon\|^2
\end{align}
for any $\eta_2>0$.
For the fourth term on the left-hand side of \eqref{HU2}, similar to \eqref{hn3},   we integrate by parts to deduce that
\begin{align}\label{hu4}
 & \quad\,  \left\langle \partial^\alpha_x\left( \frac{\Theta^\epsilon+\theta^0}{N^\epsilon+\rho^0}\nabla N^\epsilon\right), \partial^\alpha_x\U^\epsilon\right\rangle \nonumber\\
& = \left\langle  \frac{\Theta^\epsilon+\theta^0}{N^\epsilon+\rho^0}\partial^\alpha_x \nabla N^\epsilon , \partial^\alpha_x\U^\epsilon\right\rangle
 + \big\langle \mathcal{H}^{(4)}, \partial^\alpha_x\U^\epsilon\big\rangle\nonumber\\
&   =  -\left\langle\partial^\alpha_x   N^\epsilon , \dv\left(\frac{\Theta^\epsilon+\theta^0}{N^\epsilon+\rho^0}\partial^\alpha_x\U^\epsilon\right)  \right\rangle\
  +\big\langle \mathcal{H}^{(4)},\partial^\alpha_x \U^\epsilon \big\rangle\nonumber\\
&   \leq   \eta_3 \|\nabla \partial^\alpha_x \U^\epsilon\|^2+ C_{\eta_3} \| \partial^\alpha_x N^\epsilon\| ^2 +C \| \mathcal{E}^\epsilon(t)\| _{s}^4
 +\big\|  \mathcal{H}^{(4)}\big\| ^2
\end{align}
for any $\eta_3>0$,
where the commutator
\begin{align*}
 \mathcal{H}^{(4)}  =
  \partial^\alpha_x\left( \frac{\Theta^\epsilon+\theta^0}{N^\epsilon+\rho^0}\nabla N^\epsilon\right)
  -   \frac{\Theta^\epsilon+\theta^0}{N^\epsilon+\rho^0}\partial^\alpha_x \nabla N^\epsilon
\end{align*}
can be bounded  as follows by using \eqref{mb} and \eqref{mo}, and Cauchy-Schwarz's inequality:
 \begin{align}\label{hu44}
\big\|\mathcal{H}^{(4)}\big\| &\leq C\left( \left\| D_x^1\left( \frac{\Theta^\epsilon+\theta^0}{N^\epsilon+\rho^0}\right)\right\|_{L^\infty}\| D_x^sN^\epsilon\|
+\| D_x^1N^\epsilon\| _{L^\infty}\left\| D^{s-1}_x\left( \frac{\Theta^\epsilon+\theta^0}{N^\epsilon+\rho^0}\right)\right\|\right) \nonumber\\
   & \leq C(\| \mathcal{E}^\epsilon(t)\|_{s}^{2(s+1)}+\| \mathcal{E}^\epsilon(t)\| _{s}^2+\| \mathcal{E}^\epsilon(t)\| _{s}).
\end{align}

For the fifth term on the left-hand side of \eqref{HU2}, we integrate by parts to deduce
\begin{align} \label{hu5}
   &-  \left\langle   \partial^\alpha_x\left( \frac{1}{N^\epsilon+\rho^0} \dv\Psi(\U^\epsilon)\right) , \partial^\alpha_x \U^\epsilon\right\rangle\nonumber\\
  =& -  \left\langle  \frac{1}{N^\epsilon+\rho^0}  \partial^\alpha_x \dv\Psi(\U^\epsilon)  , \partial^\alpha_x\U^\epsilon\right\rangle
   -  \big\langle\mathcal{H}^{(5)}, \partial^\alpha_x\U^\epsilon\big\rangle,
  \end{align}
where the commutator
\begin{align*}
   \mathcal{H}^{(5)}=  \partial^\alpha_x\left( \frac{1}{N^\epsilon+\rho^0} \dv\Psi(\U^\epsilon)\right)
  - \frac{1}{N^\epsilon+\rho^0}  \partial^\alpha_x \dv\Psi(\U^\epsilon).
\end{align*}
By the Moser-type and Cauchy-Schwarz inequalities, the regularity of $\rho^0$ and the
positivity of $N^\epsilon+\rho_0$,  the definition of  $\Psi(\U^\epsilon)$ and Sobolev's imbedding, we find that
 \begin{align}\label{hu6}
 & \ \ \   \big|\big\langle\mathcal{H}^{(5)},
 \partial^\alpha_x\U^\epsilon\big\rangle\big|\nonumber\\
   & \leq  \big\|\mathcal{H}^{(5)}\big\|\cdot \| \partial^\alpha_x\U^\epsilon \|  \nonumber\\
   & \leq C \left(\left\Vert D_x^1\left(\frac{1}{N^\epsilon+\rho^0}\right)\right\Vert_{L^\infty}\|  \dv\Psi(\U^\epsilon)\| _{s-1}
   +\| \dv\Psi(\U^\epsilon)\| _{L^\infty}\left\Vert\frac{1}{N^\epsilon+\rho^0}\right\Vert_{s}\right)\| \partial^\alpha_x\U^\epsilon \| \nonumber\\
   &\leq \eta_4 \| \nabla \U^\epsilon\| ^2_{s}+C_{\eta_4} (\| \mathcal{E}^\epsilon(t)\| ^2_{s}+1)(\| \partial^\alpha_x
   \U^\epsilon\| ^2+\| \partial^\alpha_x  N^\epsilon\| ^2+\| \mathcal{E}^\epsilon(t)\|_{s}^{s})
 \end{align}
 for any $\eta_4>0$, where we have used the assumption $s>3/2+2$
and the imbedding $H^l(\mathbb{T}^3)\hookrightarrow L^\infty (\mathbb{R}^3)$ for $l>3/2$.
By virtue of the definition of $\Psi(\U^\epsilon)$ and partial integrations, the first term on the right-hand side of \eqref{hu5}
can be rewritten as
    \begin{align}\label{hu7}
 &  -  \left\langle  \frac{1}{N^\epsilon+\rho^0}  \partial^\alpha_x \dv\Psi(\U^\epsilon)  , \partial^\alpha_x\U^\epsilon\right\rangle\nonumber\\
 =\ &2\mu \left\langle \frac{1}{N^\epsilon+\rho^0}\partial^\alpha_x\mathbb{D}(\U^\epsilon),\partial^\alpha_x\mathbb{D}(\U^\epsilon)\right\rangle
+\lambda \left\langle \frac{1}{N^\epsilon+\rho^0}\partial^\alpha_x\dv \U^\epsilon ,\partial^\alpha_x \dv \U^\epsilon \right\rangle
\nonumber\\
& +2\mu \left\langle \nabla\left(\frac{1}{N^\epsilon+\rho^0}\right)\otimes \partial^\alpha_x\U^\epsilon,\partial^\alpha_x\mathbb{D}(\U^\epsilon)\right\rangle\nonumber\\
&+\lambda \left\langle \nabla\left(\frac{1}{N^\epsilon+\rho^0}\right)\cdot\partial^\alpha_x \U^\epsilon ,\partial^\alpha_x \dv \U^\epsilon \right\rangle
\nonumber\\
:=\ & \sum^4_{i=1}\mathcal{I}^{(i)}.
\end{align}

Recalling the facts that $\mu>0$ and $2\mu+3\lambda>0$, and the
positivity of $N^\epsilon+\rho_0$,  the first two terms $\mathcal{I}^{(1)}$ and $\mathcal{I}^{(2)}$
can be bounded as follows:
\begin{align}\label{hu8}
 \mathcal{I}^{(1)}+\mathcal{I}^{(2)} &=\int  \frac{1}{N^\epsilon+\rho^0}\big\{2\mu|\pa\mathbb{D}
 (\U^\epsilon)|^2+\lambda|\pa\mbox{tr}\mathbb{D}(\U^\epsilon)|^2\big\}{\rm d}x\nonumber\\
&  \geq 2 \mu \int \frac{1}{N^\epsilon+\rho^0}\left(|\pa\mathbb{D}(\U^\epsilon)|^2-\frac13 |\pa\mbox{tr}\mathbb{D}(\U^\epsilon)|^2\right){\rm d}x\nonumber\\
  & =   \mu \int \frac{1}{N^\epsilon+\rho^0}\left(|\pa\nabla\U^\epsilon|^2+\frac13 |\pa\dv  \U^\epsilon|^2\right){\rm d}x\nonumber\\
  & \geq   \mu \int \frac{1}{N^\epsilon+\rho^0}|\pa\nabla\U^\epsilon|^2{\rm d}x.
\end{align}

  By virtue of Cauchy-Schwarz's inequality, the regularity of $\rho^0$ and the
positivity of $N^\epsilon+\rho_0$, the terms $\mathcal{I}^{(3)}$ and $\mathcal{I}^{(4)}$ can be bounded by
 \begin{align}\label{hu9}
|\mathcal{I}^{(3)}|+|\mathcal{I}^{(4)}|
    &\leq \eta_5 \|\nabla \pa\U^\epsilon\|^2 +C_{\eta_5} (\|\mathcal{E}^\epsilon(t)\|^2_{s}+1)(\|\partial^\alpha_x
   \U^\epsilon\|^2 +\|\partial^\alpha_x N^\epsilon\|^2)
 \end{align}
 for any $\eta_5>0$, where the assumption $s>3/2+2$ has been used.

Substituting \eqref{hu1}--\eqref{hu9} into \eqref{HU2}, we conclude that
\begin{align}\label{HU20}
&   \frac12\frac{\dif}{\dif t}\langle \partial^\alpha_x\U^\epsilon,
\partial^\alpha_x\U^\epsilon \rangle
  +  \int \frac{\mu}{N^\epsilon+\rho^0} |\nabla \partial^\alpha_x \U^\epsilon|^2 \dif x-
  (\eta_1+\eta_3+\eta_4+\eta_5)\|\nabla \partial^\alpha_x \U^\epsilon\|^2\nonumber\\
  \leq\ &
   C_{{\eta}}\big\{ (\| \mathcal{E}^\epsilon(t)\| ^2_{s}+1)(\| \partial^\alpha_x
   \U^\epsilon\| ^2+\| \partial^\alpha_x  N^\epsilon\| ^2+\| \mathcal{E}^\epsilon(t)\|_{s}^{s})\big\}\nonumber\\
   & + \eta_2    \|\partial^\alpha_x\nabla \Theta^\epsilon\|^2 +\sum^{7}_{i=1}\mathcal{R}^{(i)}
\end{align}
for some constant $C_{{\eta}}>0$ depending on $\eta_i$ ($i=1,\dots,5$).

 We have to estimate the terms on the right-hand side of \eqref{HU20}.
  In view of the regularity of $(\rho^0,\u^0,\H^0)$,
the positivity of $ N^\epsilon+\rho^0$ and Cauchy-Schwarz's inequality, the first
two terms $\mathcal{R}^{(1)}$ and $\mathcal{R}^{(2)}$ can be controlled by
\begin{align}\label{hu10}
   \big|\mathcal{R}^{(1)}\big|+\big|\mathcal{R}^{(2)}\big| \leq  C(\|\mathcal{E}^\epsilon(t)\|_{s}^2 +1)
   ( \|\partial^\alpha_x N^\epsilon\|^2 +\|\partial^\alpha_x\U^\epsilon\|^2).
\end{align}

For the terms $\mathcal{R}^{(3)}$ and $\mathcal{R}^{(4)}$,  by the regularity of $ \rho^0$ and $\u^0$,
the positivity of $ N^\epsilon+\rho^0$,  Cauchy-Schwarz's inequality and \eqref{ho}, we see that
\begin{align}\label{hu11}
   \big|\mathcal{R}^{(3)}\big|+\big|\mathcal{R}^{(4)}\big| \leq  C(\|\mathcal{E}^\epsilon(t)\|_{s}^2  +
   C\|\partial^\alpha_x\U^\epsilon\|^2).
\end{align}

For the fifth term $\mathcal{R}^{(5)}$, we utilize the positivity of $N^\epsilon+\rho^0$ to deduce that
\begin{align}\label{hu12}
   \mathcal{R}^{(5)}& =  \left\langle\partial^\alpha_x \F^\epsilon\times
\frac{\H^0}{N^\epsilon+\rho^0},
   \partial^\alpha_x\U^\epsilon \right\rangle
+  \big\langle \mathcal{H}^{(6)},
\partial^\alpha_x\U^\epsilon \big\rangle + \sigma\mathcal{R}^{(5_1)}\nonumber\\
 & \leq \frac{1}{16} \| \partial^\alpha_x \F^\epsilon\| ^2+
 C\| \partial^\alpha_x\U^\epsilon\| ^2+  \big\langle \mathcal{H}^{(6)}, \partial^\alpha_x\U^\epsilon\big\rangle +  \mathcal{R}^{(5_1)},
\end{align}
where
\begin{align*}
 \mathcal{H}^{(6)}  =\partial^\alpha_x\left\{
\frac{\F^\epsilon}{N^\epsilon+\rho^0}\times
\H^0\right\}-\partial^\alpha_x \F^\epsilon\times
\frac{\H^0}{N^\epsilon+\rho^0}\nonumber\
\end{align*}
and
\begin{align*}
\mathcal{R}^{(5_1)}=\left\langle\partial^\alpha_x\left\{
\frac{\sigma}{N^\epsilon+\rho^0}[\u^0\times
\G^\epsilon+\U^\epsilon\times \H^0]\times \H^0\right\},
   \partial^\alpha_x\U^\epsilon \right\rangle.
\end{align*}

If we make use of the Moser-type inequality, \eqref{mo} and the regularity of $\rho^0$ and $\H^0$, we obtain that
 \begin{align}\label{hu13}
 &  \ \ \   \big|\big\langle\mathcal{H}^{(6)},
  \partial^\alpha_x\U^\epsilon\big\rangle\big| \leq  \big\|\mathcal{H}^{(6)}\big\|\cdot \| \partial^\alpha_x\U^\epsilon \|  \nonumber\\
   & \leq C \left[\left\Vert D_x^1\left(\frac{\H^0}{N^\epsilon+\rho^0}\right)\right\Vert_{L^\infty}\| \F^\epsilon\| _{s-1}
   +\| \F^\epsilon\| _{L^\infty}\left\Vert\frac{\H^0}{N^\epsilon+\rho^0}\right\Vert_{s}\right]\| \partial^\alpha_x\U^\epsilon \| \nonumber\\
   &\leq \eta_6 \| \F^\epsilon\| ^2_{s-1}+C_{\eta_6} (\| \mathcal{E}^\epsilon(t)\| ^{2(s+1)}_{s}+1) \| \partial^\alpha_x
   \U^\epsilon\| ^2
 \end{align}
 for any $\eta_6>0$. Recalling the regularity of $\u^0$ and $\H^0$, \eqref{ma} and \eqref{mo} and H\"{o}lder's
inequality, we find that
\begin{align}\label{hu14}
\big|\mathcal{R}^{(5_1)}\big|\leq  C(\| \mathcal{E}^\epsilon(t)\| _{s}^s
+1)(\| \partial^\alpha_x
N^\epsilon\| ^2+\| \partial^\alpha_x\U^\epsilon\| ^2+\| \partial^\alpha_x\G^\epsilon\| ^2).
\end{align}

For the sixth term $\mathcal{R}^{(6)}$ we again make use of the positivity of
$N^\epsilon+\rho^0$ and Sobolev's imbedding to infer that
\begin{align}\label{hu15}
   \mathcal{R}^{(6)}& = \left\langle\partial^\alpha_x \F^\epsilon\times
\frac{\G^\epsilon}{N^\epsilon+\rho^0},
   \partial^\alpha_x\U^\epsilon \right\rangle
+  \big\langle \mathcal{H}^{(7)},\partial^\alpha_x\U^\epsilon\big\rangle +  \mathcal{R}^{(6_1)}\nonumber\\
 & \leq \frac{1}{16} \| \partial^\alpha_x \F^\epsilon\| ^2+
 C\| \mathcal{E}^\epsilon(t)\| _{s}^2
 \| \partial^\alpha_x\U^\epsilon\| ^2+ \big\langle \mathcal{H}^{(7)},  \partial^\alpha_x\U^\epsilon \big\rangle +  \mathcal{R}^{(6_1)},
\end{align}
where
\begin{align*}
 \mathcal{H}^{(7)}  =\partial^\alpha_x\left\{
\frac{\F^\epsilon}{N^\epsilon+\rho^0}\times
\G^\epsilon\right\}-\partial^\alpha_x \F^\epsilon\times
\frac{\G^\epsilon}{N^\epsilon+\rho^0}\nonumber\
\end{align*}
and
\begin{align*}
\mathcal{R}^{(6_1)}=\left\langle\partial^\alpha_x\left\{
\frac{\sigma}{N^\epsilon+\rho^0}[\u^0\times
\G^\epsilon+\U^\epsilon\times \H^0]\times \G^\epsilon\right\},
   \partial^\alpha_x\U^\epsilon \right\rangle.
\end{align*}
From the H\"{o}lder's and Moser-type inequalities we get
 \begin{align}\label{hu16}
 &  \ \ \ \  \big|\big\langle\mathcal{H}^{(7)},
  \partial^\alpha_x\U^\epsilon\big\rangle\big| \nonumber\\
  &\leq \big \| \mathcal{H}^{(7)}\big\| \cdot \| \partial^\alpha_x\U^\epsilon \|  \nonumber\\
  & \leq C \left[\left\Vert D_x^1\left(\frac{\G^\epsilon}{N^\epsilon+\rho^0}\right)\right\Vert_{L^\infty}\| \F^\epsilon\| _{s-1}
   +\| \F^\epsilon\| _{L^\infty}\left\Vert\frac{\G^\epsilon}{N^\epsilon+\rho^0}\right\Vert_{s}\right]\| \partial^\alpha_x\U^\epsilon \| \nonumber\\
   &\leq \eta_7 \| \F^\epsilon\| ^2_{s-1}+C_{\eta_7} (\| \mathcal{E}^\epsilon(t)\| ^{2(s+1)}_{s}+1) \| \partial^\alpha_x
   \U^\epsilon\| ^2
 \end{align}
 for any $\eta_7>0$, while for the term $\mathcal{R}^{(6_1)}$ one has the following estimate
\begin{align}\label{hu17}
\big| \mathcal{R}^{6_1)}\big|\leq C(\| \mathcal{E}^\epsilon(t)\|_{s}^2
+\| \mathcal{E}^\epsilon(t)\| _{s} +1)\| \mathcal{E}^\epsilon(t)\|_{s}^2.
\end{align}

For the last term $\mathcal{R}^{(7)}$, recalling the formula
     $  (\mathbf{a} \times \mathbf{b}) \times \mathbf{c}=
     (\mathbf{a} \cdot \mathbf{c})\mathbf{b}-(\mathbf{b} \cdot \mathbf{c})\mathbf{a}$ and applying
     \eqref{ma}, \eqref{mo}, and H\"{o}lder's inequality,
we easily deduce that
\begin{align}
\big|\mathcal{R}^{(7)}\big| = \ &
\left|\left\langle\partial^\alpha_x\left\{\frac{1}{N^\epsilon+\rho^0}
  \{[\U^\epsilon\cdot(\G^\epsilon+\H^0)]\G^\epsilon-[\G^\epsilon\cdot(\G^\epsilon+\H^0)]\U^\epsilon\}\right\},\partial^\alpha_x\U^\epsilon
   \right\rangle\right| \nonumber\\
 \leq \ &
    C(\| \mathcal{E}^\epsilon(t)\| _{s}^s+1)\| \mathcal{E}^\epsilon(t)\| _{s}^4+\| \mathcal{E}^\epsilon(t)\| _{s}^3.  \label{hu18}
   \end{align}

Substituting  \eqref{hu10}--\eqref{hu18} into \eqref{HU20}, we conclude that
 \begin{align}\label{HU202}
&   \frac12\frac{\dif}{\dif t}\langle \partial^\alpha_x\U^\epsilon,
\partial^\alpha_x\U^\epsilon \rangle
  +  \int \frac{\mu}{N^\epsilon+\rho^0} |\nabla \partial^\alpha_x \U^\epsilon|^2 \dif x-
  (\eta_1+\eta_3+\eta_4+\eta_5)\|\nabla \partial^\alpha_x \U^\epsilon\|^2\nonumber\\
\leq \  &
    \tilde{C}_{{\eta}}\big[(\| \mathcal{E}^\epsilon(t)\| _{s}^{2s}+1)\| \mathcal{E}^\epsilon(t)\| _{s}^4+\| \mathcal{E}^\epsilon(t)\| _{s}^3 + \| \mathcal{E}^\epsilon(t)\| ^2_{s} \big]\nonumber\\
  & +\eta_2    \|\partial^\alpha_x\nabla \Theta^\epsilon\|^2+\left(\eta_6+\eta_7+\frac{1}{8}\right)\| \F^\epsilon\| ^2_{s}.
\end{align}
for some constant $\tilde{C}_{{\eta}}>0$ depending on $\eta_i$ ($i=1,\dots,7$).

  Applying the operator $\partial^\alpha_x$ to \eqref{error22}, multiplying  the resulting equation
    by $ \partial^\alpha_x\Theta^\epsilon$, and integrating over $\mathbb{T}^3$, we arrive at
\begin{align}  \label{HT1}
 & \frac12\frac{\dif}{\dif t}\langle\pa\Theta^\epsilon,\pa\Theta^\epsilon\rangle  +\langle \pa\{[(\U^\epsilon+\u^0)\cdot \nabla]\Theta^\epsilon\},\pa\Theta^\epsilon\rangle    \nonumber\\
& + \left\langle\pa\{(\Theta^\epsilon+\theta^0)\, \dv \U^\epsilon\},\pa\Theta^\epsilon\right\rangle-\left\langle\pa\left\{\frac{\kappa}{N^\epsilon+\rho^0}\Delta \Theta^\epsilon\right\},\pa\Theta^\epsilon\right\rangle\nonumber\\
 =  &-\langle \pa\{(\U^\epsilon \cdot \nabla )\theta^0
  -\Theta^\epsilon \dv \u^0\},\pa\Theta^\epsilon\rangle\nonumber\\
  & +\left\langle\pa\left\{\left[\frac{\kappa}{N^\epsilon+\rho^0}-\frac{\kappa}{\rho^0}\right]\Delta \theta^0 \right\},\pa\Theta^\epsilon\right\rangle   \nonumber\\
  & +\left\langle\pa\left\{\left[\frac{2\mu}{N^\epsilon+\rho^0}-\frac{2\mu}{\rho^0}\right]|\mathbb{D}(\u^0)|^2 \right\},\pa\Theta^\epsilon\right\rangle   \nonumber\\
  & +\left\langle\pa\left\{\left[\frac{\lambda}{N^\epsilon+\rho^0}-\frac{\lambda}{\rho^0}\right](\mbox{tr}\mathbb{D}(\u^0))^2 \right\},\pa\Theta^\epsilon\right\rangle   \nonumber\\
  & +\left\langle\pa\left\{\left[\frac{1}{N^\epsilon+\rho^0}-\frac{1}{\rho^0}\right]|\cu \H^0|^2 \right\},\pa\Theta^\epsilon\right\rangle   \nonumber\\
  &  + \left\langle\pa\left\{\frac{ 2\mu}{N^\epsilon+\rho^0} |\mathbb{D}(\U^\epsilon)|^2+\frac{\lambda}{N^\epsilon+\rho^0}|\mbox{tr}\mathbb{D}(\U^\epsilon)|^2\right\},\pa\Theta^\epsilon\right\rangle\nonumber\\
  &   + \left\langle\pa\left\{\frac{ 4\mu}{N^\epsilon+\rho^0}\mathbb{D}(\U^\epsilon): \mathbb{D}(\u^0)\right\},\pa\Theta^\epsilon\right\rangle\nonumber\\
   &  + \left\langle\pa\left\{\frac{ 2\lambda}{N^\epsilon+\rho^0}\,[\mbox{tr}\mathbb{D}(\U^\epsilon) \mbox{tr}\mathbb{D}(\u^0)]\right\},\pa\Theta^\epsilon\right\rangle\nonumber\\
    &  +\left\langle\pa\left\{ \frac{1}{N^\epsilon+\rho^0}|\F^\epsilon+\U^\epsilon\times \G^\epsilon|^2\right\},\pa\Theta^\epsilon\right\rangle\nonumber\\
    & +\left\langle\pa\left\{ \frac{1}{N^\epsilon+\rho^0} |\u^0\times \G^\epsilon+\U^\epsilon\times \H^0|^2\right\},\pa\Theta^\epsilon\right\rangle\nonumber\\
    &  + \left\langle\pa\left\{\frac{2\F^\epsilon}{N^\epsilon+\rho^0}\cdot
                   [\cu \H^0+\u^0\times \G^\epsilon+\U^\epsilon\times \H^0]\right\},\pa\Theta^\epsilon\right\rangle\nonumber\\
    &  + \left\langle\pa\left\{\frac{2(\U^\epsilon\times \G^\epsilon)}{N^\epsilon+\rho^0}\cdot
                   [\cu \H^0+\u^0\times \G^\epsilon+\U^\epsilon\times \H^0]\right\},\pa\Theta^\epsilon\right\rangle\nonumber\\
      &  +   \left\langle\pa\left\{\frac{2}{N^\epsilon+\rho^0}\cu \H^0\cdot (\u^0\times \G^\epsilon+\U^\epsilon\times \H^0)\right\},\pa\Theta^\epsilon\right\rangle\nonumber\\
      := & \sum^{13}_{i=1}\mathcal{S}^{(i)}.
\end{align}

We first bound the terms on the left-hand side of \eqref{HT1}. Similar to \eqref{hn2}, we  have
\begin{align}\label{ht1}
 &  \ \ \ \ \langle \partial^\alpha_x([(\U^\epsilon+\u^0)\cdot \nabla]\Theta^\epsilon),\partial^\alpha_x\Theta^\epsilon\rangle\nonumber\\
  & = \langle [(\U^\epsilon+\u^0)\cdot \nabla]\partial^\alpha_x \Theta^\epsilon , \partial^\alpha_x \Theta^\epsilon\rangle\
  +\big\langle \mathcal{H}^{(8)},\partial^\alpha_x \Theta^\epsilon \big\rangle\nonumber\\
  & = -\frac12 \langle \dv(\U^\epsilon+\u^0) \partial^\alpha_x \Theta^\epsilon , \partial^\alpha_x \Theta^\epsilon \rangle\
   +\big\langle \mathcal{H}^{(8)},\partial^\alpha_x \Theta^\epsilon \big \rangle\nonumber\\
 & \leq C(\| \mathcal{E}^\epsilon(t)\| _{s}+1)\| \partial^\alpha_x \Theta^\epsilon\| ^2   +\big\|  \mathcal{H}^{(8)}\big\| ^2,
\end{align}
where the commutator
\begin{align*}
 \mathcal{H}^{(8)}  =\partial^\alpha_x([(\U^\epsilon+\u^0)\cdot \nabla]\Theta^\epsilon)-[(\U^\epsilon+\u^0)\cdot \nabla]\partial^\alpha_x \Theta^\epsilon
\end{align*}
can be bounded by
\begin{align}\label{ht2}
  \big\|\mathcal{H}^{(8)} \big\| &\leq C( \| D_x^1(\U^\epsilon+\u^0)\| _{L^\infty}\| D_x^s\U^\epsilon\|
+\| D_x^1\U^\epsilon\| _{L^\infty}\| D^{s-1}_x(\U^\epsilon+\u^0)\|) \nonumber\\
   & \leq C(\| \mathcal{E}^\epsilon(t)\| _{s}^2+\| \mathcal{E}^\epsilon(t)\| _{s}).
\end{align}
The second term on the left-hand side of \eqref{HT1} can bounded, similar to \eqref{hn3}, as follows:
\begin{align}\label{ht3}
  & \left\langle \pa((\Theta^\epsilon+\theta^0)\dv \U^\epsilon), \pa \Theta^\epsilon\right\rangle\nonumber\\
 =\ &\langle (\Theta^\epsilon+\rho^0) \partial^\alpha_x\dv \U^\epsilon , \partial^\alpha_x \Theta^\epsilon\rangle
  +\big\langle \mathcal{H}^{(9)},\partial^\alpha_x \Theta^\epsilon \big\rangle\nonumber\\
  \leq\  & \eta_8 \|\nabla \partial^\alpha_x \U^\epsilon\|^2+ C_{\eta_8} \| \partial^\alpha_x N^\epsilon\| ^2   +\big\|  \mathcal{H}^{(9)}\big\| ^2
\end{align}
for any $\eta_8>0$, where the commutator
\begin{align*}
 \mathcal{H}^{(9)}  =\pa((\Theta^\epsilon+\rho^0)\dv \U^\epsilon) - (\Theta^\epsilon+\theta^0) \partial^\alpha_x\dv \U^\epsilon
\end{align*}
can be controlled as
\begin{align}\label{ht4}
  \big\|\mathcal{H}^{(9)}\big\| &\leq C( \| D_x^1(\Theta^\epsilon+\theta^0)\| _{L^\infty}\| D_x^s\U^\epsilon\|
+\| D_x^1\U^\epsilon\| _{L^\infty}\| D^{s-1}_x(\Theta^\epsilon+\theta^0)\| )\nonumber\\
   & \leq C(\| \mathcal{E}^\epsilon(t)\| _{s}^2+\| \mathcal{E}^\epsilon(t)\| _{s}).
\end{align}

For the fourth term on the left-hand side of \eqref{HT1}, we integrate by parts to deduce that
\begin{align} \label{ht5}
   &- \kappa \left\langle \partial^\alpha_x\left( \frac{1}{N^\epsilon+\rho^0}\Delta \U^\epsilon\right), \partial^\alpha_x\Theta^\epsilon\right\rangle
   \nonumber \\
  & = - \kappa  \left\langle  \frac{1}{N^\epsilon+\rho^0}\Delta \partial^\alpha_x \Theta^\epsilon , \partial^\alpha_x\Theta^\epsilon\right\rangle
   -  \kappa\big\langle\mathcal{H}^{(10)}, \partial^\alpha_x\Theta^\epsilon\big\rangle  \nonumber\\
 & = \kappa \left\langle  \frac{1}{N^\epsilon+\rho^0}\nabla\partial^\alpha_x \Theta^\epsilon , \nabla\partial^\alpha_x\Theta^\epsilon\right\rangle
     \nonumber\\
   & \quad + \kappa\left \langle \nabla \left(\frac{1}{N^\epsilon+\rho^0}\right)   \nabla\partial^\alpha_x \Theta^\epsilon,
\partial^\alpha_x \Theta^\epsilon\right\rangle -  \kappa\big\langle\mathcal{H}^{(10)}, \partial^\alpha_x\Theta^\epsilon\big\rangle,
   \end{align}
where
\begin{align*}
   \mathcal{H}^{(10)}=  \partial^\alpha_x\left( \frac{1}{N^\epsilon+\rho^0}\Delta \Theta^\epsilon\right)
   - \frac{1}{N^\epsilon+\rho^0}\Delta \partial^\alpha_x \Theta^\epsilon.
\end{align*}
By the Moser-type and H\"{o}lder's inequalities, the regularity of $\rho^0$, the
positivity of $N^\epsilon+\rho_0$ and \eqref{mo}, we find that
 \begin{align}\label{ht6}
 & \ \ \   \big|\big\langle\mathcal{H}^{(10)},
 \partial^\alpha_x\Theta^\epsilon\big\rangle\big|
    \leq  \big\|\mathcal{H}^{(10)}\big\|\cdot \| \partial^\alpha_x\Theta^\epsilon \|  \nonumber\\
   & \leq C \left(\left\Vert D_x^1\left(\frac{1}{N^\epsilon+\rho^0}\right)\right\Vert_{L^\infty}\| \Delta \Theta^\epsilon\| _{s-1}
   +\| \Delta \Theta^\epsilon\| _{L^\infty}\left\Vert\frac{1}{N^\epsilon+\rho^0}\right\Vert_{s}\right)\| \partial^\alpha_x\Theta^\epsilon \| \nonumber\\
   &\leq \eta_9 \| \nabla \Theta^\epsilon\| ^2_{s}+C_{\eta_9} (\| \mathcal{E}^\epsilon(t)\| ^{s}_{s}+1)(\| \partial^\alpha_x
   \Theta^\epsilon\| ^2+\| \partial^\alpha_x
  N^\epsilon\| ^2)
 \end{align}
 and
 \begin{align}\label{ht7}
 & \ \ \  \left|\left \langle \nabla \left(\frac{1}{N^\epsilon+\rho^0}\right) \nabla{\partial^\alpha_x \Theta^\epsilon} ,
\partial^\alpha_x \Theta^\epsilon\right\rangle\right|\nonumber\\
   &  \leq\eta_{10}\|\nabla \partial^\alpha_x\Theta^\epsilon\|^2
   + C_{\eta_{10}} \left\Vert \nabla\left(\frac{1}{N^\epsilon+\rho^0}\right)\right\Vert_{L^\infty}^2
   \|\partial^\alpha_x   \Theta^\epsilon\|^2\nonumber\\
   &  \leq \eta_{10}\|\nabla \partial^\alpha_x\Theta^\epsilon\|^2
   + C_{\eta_{10}}  (\|\mathcal{E}^\epsilon(t)\|^2_{s}+1)\| \partial^\alpha_x\Theta^\epsilon\|^2
   \end{align}
 for any $\eta_9>0 $ and $\eta_{10}>0$, where
 we have used the assumption  $s>3/2+2$ in the derivation of \eqref{ht6}
and the imbedding $H^l ( \mathbb{T}^3)\hookrightarrow L^\infty (\mathbb{R}^3)$ for $l>3/2$.

 Now, we estimate every term on the right-hand side of \eqref{HT1}.
 By virtue of the regularity of $ \theta^0$ and $\u^0$, and Cauchy-Schwarz's inequality, the first
 term $\mathcal{S}^{(1)}$ can be estimated as follows:
\begin{align}\label{ht8}
  \big| \mathcal{S}^{(1)} \big|  \leq  C(\|\mathcal{E}^\epsilon(t)\|_{s}^2 +1)
   ( \|\partial^\alpha_x \Theta^\epsilon\|^2 +\|\partial^\alpha_x\U^\epsilon\|^2).
\end{align}

For the  terms $\mathcal{S}^{(i)}\,(i=2,3,4,5)$, we utilize the regularity of $ \rho^0$, $\u^0$ and $\H^0$,
the positivity of $ N^\epsilon+\rho^0$ , Cauchy-Schwarz's inequality and \eqref{ho} to deduce that
\begin{align}\label{ht88}
   \big| \mathcal{S}^{(2)} \big|+ \big|\mathcal{S}^{(3)} \big| + \big|\mathcal{S}^{(4)} \big|+ \big|\mathcal{S}^{(5)} \big|
   \leq  C(\|\mathcal{E}^\epsilon(t)\|_{s}^{2s} +\|\mathcal{E}^\epsilon(t)\|_{s}^{2})+
   C\|\partial^\alpha_x\Theta^\epsilon\|^2,
\end{align}
while for the sixth term $\mathcal{S}^{(6)}$, we integrate by parts, and use Cauchy-Schwarz's inequality and the positivity of
$\Theta^\epsilon+\rho^0$ to obtain that
\begin{align}\label{ht9}
\mathcal{S}^{(6)}=\ &- \left\langle\partial_x^{\alpha-\alpha_1}\left\{\frac{ 2\mu}{N^\epsilon+\rho^0} |\mathbb{D}(\U^\epsilon)|^2
+\frac{\lambda}{N^\epsilon+\rho^0}|\mbox{tr}\mathbb{D}(\U^\epsilon)|^2\right\},\partial_x^{\alpha-\alpha_1}\Theta^\epsilon\right\rangle\nonumber\\
 \leq\ &\eta_{11} \|\nabla \pa \Theta^\epsilon\|^2+C_{\eta_{11}}(\|\mathcal{E}^\epsilon(t)\|_{s}^4+\|\mathcal{E}^\epsilon(t)\|_{s}^{2(s+1)})
\end{align}
for any $\eta_{11}>0$, where $\alpha_1=(1,0,0)$ or $(0,1,0)$ or $(0,0,1)$.  Similarly, we have
\begin{align}\label{ht11}
\big|\mathcal{S}^{(7)}\big| +\big|\mathcal{S}^{(8)}\big|
 \leq  \eta_{12} \|\nabla \pa \Theta^\epsilon\|^2+C_{\eta_{12}}(\|\mathcal{E}^\epsilon(t)\|_{s}^4+\|\mathcal{E}^\epsilon(t)\|_{s}^{2(s+1)})
\end{align}
for any $\eta_{12}>0$.

For the ninth term $\mathcal{S}^{(9)}$, we rewrite it as
\begin{align*}
\mathcal{S}^{(9)} &=  \left\langle\pa\left\{ \frac{1}{N^\epsilon+\rho^0}|\F^\epsilon
+\U^\epsilon\times \G^\epsilon|^2\right\},\pa\Theta^\epsilon\right\rangle\nonumber\\
 &=  \left\langle\pa\left\{ \frac{1}{N^\epsilon+\rho^0}|\F^\epsilon|^2\right\},\pa\Theta^\epsilon\right\rangle\nonumber\\
  &\ \ \ \ +  \left\langle\pa\left\{ \frac{2}{N^\epsilon+\rho^0}\F^\epsilon
  \cdot(\U^\epsilon\times \G^\epsilon)\right\},\pa\Theta^\epsilon\right\rangle\nonumber\\
   &\ \ \ \  +  \left\langle\pa\left\{ \frac{1}{N^\epsilon+\rho^0} |\U^\epsilon\times
   \G^\epsilon|^2\right\},\pa\Theta^\epsilon\right\rangle\nonumber\\
    &:= \mathcal{S}^{(9_1)}+\mathcal{S}^{(9_2)}+\mathcal{S}^{(9_3)}.
      \end{align*}
By Cauchy-Schwarz's inequality and Sobolev's embedding, we can bound the term $\mathcal{S}^{(9_1)}$ by
\begin{align}\label{ht111}
 \mathcal{S}^{(9_1)} =& \left\langle \frac{1}{N^\epsilon+\rho^0}\pa\left(|\F^\epsilon|^2\right),\pa\Theta^\epsilon\right\rangle\nonumber\\
 & +\sum_{\beta \leq \alpha,|\beta|<|\alpha|}
 \left\langle\partial_x^{\alpha-\beta}\left( \frac{1}{N^\epsilon+\rho^0}\right)\partial_x^{\beta}|\F^\epsilon|^2,\pa\Theta^\epsilon\right\rangle\nonumber\\
 \leq & \gamma_1 \|\F^\epsilon\|^4_s+C_{\gamma_1}\|\pa\Theta^\epsilon\|^2(1+\|\mathcal{E}(t)\|^{2(s+1)}_s)
\end{align}
for any $\gamma_1>0$. For  the term $\mathcal{S}^{(9_2)}$, similar to $\mathcal{R}^{(6)}$,  we have
\begin{align}\label{ht12}
   \mathcal{S}^{(9_2)}& = 2\left\langle\partial^\alpha_x \F^\epsilon\cdot
\frac{\U^\epsilon\times\G^\epsilon }{N^\epsilon+\rho^0},
   \partial^\alpha_x\Theta^\epsilon \right\rangle
+  2\big\langle \mathcal{H}^{(11)},
\partial^\alpha_x\Theta^\epsilon
\big\rangle \nonumber\\
 & \leq \frac{1}{16} \| \partial^\alpha_x \F^\epsilon\| ^2+
 C\| \mathcal{E}^\epsilon(t)\| _{s}^2
 \| \partial^\alpha_x\U^\epsilon\| ^2+2 \big\langle \mathcal{H}^{(11)},  \partial^\alpha_x\Theta^\epsilon
\big\rangle,
\end{align}
where
\begin{align*}
 \mathcal{H}^{(11)} =\partial^\alpha_x\left\{ \F^\epsilon\cdot
\frac{\U^\epsilon\times\G^\epsilon }{N^\epsilon+\rho^0}\right\}-\partial^\alpha_x \F^\epsilon\cdot
\frac{\U^\epsilon\times\G^\epsilon }{N^\epsilon+\rho^0}.
\end{align*}
By the Cauchy-Schwarz's and Moser-type inequalities, we obtain that
 \begin{align}\label{ht13}
 &  \ \ \  \ 2\big|\big\langle\mathcal{H}^{(11)},
  \partial^\alpha_x\Theta^\epsilon\big\rangle\big| \nonumber\\
  &\leq  2\big\| \mathcal{H}^{(11)}\big\| \cdot \| \partial^\alpha_x\Theta^\epsilon \|  \nonumber\\
  & \leq C \left[\left\Vert D_x^1\left(\frac{\U^\epsilon\times\G^\epsilon }{N^\epsilon+\rho^0}\right)\right\Vert_{L^\infty}\| \F^\epsilon\| _{s-1}
   +\| \F^\epsilon\| _{L^\infty}\left\Vert\frac{\U^\epsilon\times\G^\epsilon }{N^\epsilon+\rho^0}\right\Vert_{s}\right]\| \partial^\alpha_x\Theta^\epsilon \| \nonumber\\
   &\leq  \gamma_2 \| \F^\epsilon\| ^2_{s-1}+C_{\gamma_2} (\| \mathcal{E}^\epsilon(t)\| ^2_{s}+1) \| \partial^\alpha_x
   \Theta^\epsilon\| ^2.
 \end{align}
for any $\gamma_2>0$. The term $\mathcal{S}^{(9_3)}$ can be bounded as follows, using the Cauchy-Schwarz and Moser-type inequalities.
\begin{align}\label{ht133}
\big|\mathcal{S}^{(9_3)} \big|  \leq  C\|\pa\Theta^\epsilon\|^2(1+\|\mathcal{E}(t)\|^{2(s+1)}_s).
\end{align}

 By  the regularity of $ \theta^0$, $\u^0$ and $\H^0$, the positivity of $ \Theta^\epsilon+\rho^0$,
 and Cauchy-Schwarz's inequality, the first
 terms $\mathcal{S}^{(10)}$ and  $\mathcal{S}^{(13)}$   can be bounded as follows:
\begin{align}\label{ht14}
   \big| \mathcal{S}^{(10)} \big| +\big| \mathcal{S}^{(13)} \big|  \leq  C(\|\mathcal{E}^\epsilon(t)\|_{s}^{2s} +1)
   ( \|\partial^\alpha_x \Theta^\epsilon\|^2 +\|\partial^\alpha_x\U^\epsilon\|^2+\|\partial^\alpha_x\G^\epsilon\|^2).
\end{align}
In a manner similar to $\mathcal{S}^{(9_2)}$, we can control the term $ \mathcal{S}^{(11)}$ by
\begin{align}\label{ht15}
  \big|\mathcal{S}^{(11)}\big|\leq   \gamma_3 \| \F^\epsilon\| ^2_{s-1}+C_{\gamma_3} (\| \mathcal{E}^\epsilon(t)\| ^{2s}_{s}+1) ( \|\partial^\alpha_x \Theta^\epsilon\|^2 +\|\partial^\alpha_x\U^\epsilon\|^2+\|\partial^\alpha_x\G^\epsilon\|^2)
\end{align}
 for any  $\gamma_3>0$.
Finally, similarly to $\mathcal{S}^{(9_3)}$, the term $ \mathcal{S}^{(12)}$ can be bounded  by
\begin{align}\label{ht16}
 \big| \mathcal{S}^{(12)}\big|   \leq & C\|\pa\Theta^\epsilon\|^2(1+\|\mathcal{E}(t)\|^{2(s+1)}_s).
\end{align}

Substituting  \eqref{ht1}--\eqref{ht16} into \eqref{HT1}, we conclude that
 \begin{align}\label{HT202}
& \frac12\frac{\dif}{\dif t} \langle \partial^\alpha_x\Theta^\epsilon,
\partial^\alpha_x\Theta^\epsilon \rangle +
\kappa \left\langle  \frac{1}{N^\epsilon+\rho^0}\nabla\partial^\alpha_x \Theta^\epsilon , \nabla\partial^\alpha_x\Theta^\epsilon\right\rangle \nonumber\\
& -(\eta_9+  \eta_{10}+\eta_{11}+\eta_{12})\| \nabla
\partial^\alpha_x\Theta^\epsilon \| ^2  \nonumber\\
\leq  \ &  {C}_{{\eta,\gamma}}\big[(\| \mathcal{E}^\epsilon(t)\| ^{2(s+1)}_{s}
    +\| \mathcal{E}^\epsilon(t)\| ^2_{s}+\| \mathcal{E}^\epsilon(t)\| _{s}+1) \| \mathcal{E}^\epsilon(t)\| ^2_{s} \big]\nonumber\\
  & +\gamma_1 \| \F^\epsilon\| ^4_{s} +\left(\gamma_2+\gamma_3+\frac{1}{16}\right)\| \F^\epsilon\| ^2_{s}
\end{align}
for some constant $ {C}_{{\eta,\gamma}}>0$ depending on $\eta_i$ ($i=9,10,11,12$) and $\gamma_j$ ($j=1,2,3$).

Applying the operator $\partial^\alpha_x$ to  \eqref{error3} and \eqref{error4}, multiplying  the results
    by ${ }\partial^\alpha_x\F^\epsilon$ and   $\partial^\alpha_x\G^\epsilon$  respectively, and integrating then over
    ${\mathbb T}^3$, one obtains that
\begin{align}
  &  \frac12\frac{\dif}{\dif t}({\epsilon }\|\partial^\alpha_x\F^\epsilon\| ^2
  +\| \partial^\alpha_x\G^\epsilon\| ^2) +  \| \partial^\alpha_x\F^\epsilon\| ^2\nonumber\\
 & +\int (\cu \partial^\alpha_x\F^\epsilon\cdot \partial^\alpha_x \G^\epsilon-\cu \partial^\alpha_x\G^\epsilon\cdot
 \partial^\alpha_x\F^\epsilon)\dif x\nonumber\\
 = \ &   \left\langle [\partial^\alpha_x(\U^\epsilon\times \H^0)+\partial^\alpha_x(\u^0\times \G^\epsilon)]
  - \partial^\alpha_x(\U^\epsilon\times \G^\epsilon),  \partial^\alpha_x\F^\epsilon\right\rangle\nonumber\\
 & -  \left\langle {\epsilon} \partial^\alpha_x\partial_t
 \cu \H^0+\epsilon\partial^\alpha_x\partial_t(\u^0\times \H^0),
 \partial^\alpha_x\F^\epsilon\right\rangle. \label{L2MM}
\end{align}

Following a process similar to that in \cite{JL} and applying \eqref{L2MM}, we finally obtain that
\begin{align}
 &\frac12  \frac{\dif}{\dif t}({\epsilon }\|\partial^\alpha_x\F^\epsilon\|^2+\| \partial^\alpha_x\G^\epsilon\| ^2)
   +\frac{3}{4}\| \partial^\alpha_x\F^\epsilon\| ^2  \nonumber\\
  &\qquad \qquad \leq C(\| \mathcal{E}^\epsilon(t)\|_{s}^2+1)\|
  (\partial^\alpha_x\U^\epsilon,\partial^\alpha_x\G^\epsilon)\|^2+C\epsilon^2. \label{L2M2b}
\end{align}

Combining \eqref{HN1},  \eqref{HU202}, and  \eqref{HT202} with \eqref{L2M2b}, summing up $\alpha$ with
$0\leq|\alpha|\leq s$, using the fact that $N^\epsilon +\rho^0\geq \hat N+\hat \rho>0$, $\F^\epsilon \in C^l([0,T],H^{s-2l})$
($l=0,1$), and choosing $\eta_i$ ($i=1,\dots,12$) and $\gamma_1,\gamma_2,\gamma_3$
sufficiently small, we obtain \eqref{H2}. This completes the proof of Lemma 3.2.
\end{proof}

With the estimate  \eqref{H2} in hand, we can now  prove Proposition \ref{P31}.

\begin{proof}[Proof of Proposition \ref{P31}]
As in \cite{PW,JL}, we introduce an $\epsilon$-weighted energy functional
$$\Gamma^\epsilon(t)  =  \v \mathcal{E}^\epsilon(t)\v ^2_{s}.$$
 Then, it follows from \eqref{H2} that there exists a constant
 $\epsilon> 0$ depending only on $T$, such that
for any $\epsilon\in (0,\epsilon]$ and any  $t\in (0,T]$,
\begin{align}\label{gma}
\Gamma^\epsilon(t)\leq C\Gamma^\epsilon(t = 0)+C\int^t_0\Big\{
\big((\Gamma^\epsilon)^{s}+
 \Gamma^\epsilon +1\big)\Gamma^\epsilon\Big\}(\tau)\dif \tau
+ C\epsilon^2.
\end{align}
 Thus, applying the Gronwall lemma to \eqref{gma}, and keeping in mind that $\Gamma^\epsilon \,(t=0)\leq C\epsilon^2$
 and Proposition \ref{P31}, we find that there exist a $0<T_1<1$ and an $\epsilon>0$, such that $T^\epsilon\geq T_1$
for all $\epsilon\in (0,\epsilon]$ and
$\Gamma^\epsilon(t)\leq C\epsilon^2$ for all $ t \in (0,T_1]$. Therefore, the desired a priori estimate
\eqref{www} holds. Moreover, by the standard continuation induction argument, we can extend
$T^\epsilon\geq T_0$  to any $T_0<T_*$.
\end{proof}


\medskip \noindent
{\bf Acknowledgements:}
The authors are very grateful to the referees for their  helpful suggestions, which  improved the earlier
version of this paper.
Jiang is supported by the National Basic Research Program under the Grant 2011CB309705 and NSFC (Grant Nos. 11229101, 11371065);
and Li is supported  by NSFC (Grant No. 11271184), PAPD, and  NCET-11-0227.

\bibliographystyle{plain}

\end{document}